\newcommand{\Q}{\mathbb{Q}}
\DeclareMathOperator{\NP}{NP}
\newcommand{\isom}{\simeq}
\newcommand{\union}{\cup} 
\newcommand{\floor}[1]{\left\lfloor #1 \right\rfloor}
\numberwithin{equation}{section}
\newtheorem{thm}{Theorem}
\newtheorem{lem}[thm]{Lemma}
\newtheorem{prop}[thm]{Proposition}
\newtheorem{cor}[thm]{Corollary}
\theoremstyle{definition}
\newtheorem{defn}[thm]{Definition}
\newtheorem{eg}[thm]{Example}
\newtheorem*{Ack}{Acknowledgements}
\theoremstyle{remark}
\theoremstyle{remark}
\newtheorem{remark}[equation]{Remark}
\definecolor{darkgreen}{rgb}{0,0.5,0}
\DeclareRobustCommand{\SkipTocEntry}[5]{}
\begin{document}
\title{Intersections of the Ekedahl-Oort and Newton strata of $\mathcal{A}_{5}$}

\begin{abstract}
The moduli space $\mathcal{A}_g$ of principally polarised abelian varieties of dimension $g$, defined over an algebraically closed field of characteristic $p >0$, is studied through various stratifications. The two most prominent ones are the Newton stratification, based on the isogeny class of the $p$-divisible group of an abelian variety, and the Ekedahl-Oort stratification, defined by the isomorphism class of its $p$-torsion group scheme. In general, it is not known how the strata of these two intersect. In this paper we completely determine which of these intersections are non-empty in dimension five. As a consequence, we give an explicit description of the induced Ekedahl-Oort stratification on the supersingular locus $\mathcal{S}_{5}$. 
\end{abstract}

\author{Steven R.  Groen}
 \author{Elvira Lupoian} 
 \author{Mychelle Parker}

\address{Steven Groen \\ Korteweg-de Vries Institute for Mathematics, University of Amsterdam, Amsterdam, Netherlands}
\email{s.r.groen@uva.nl}

\address{Elvira Lupoian \\ Department of Mathematics, University College London, London \\ United Kingdom}
 \email{e.lupoian@ucl.ac.uk}

 \address{Mychelle Parker \\ Department of Mathematics, The University of Texas at Austin, Austin  \\ United States}
 \email{mychelle.parker@austin.utexas.edu}

\keywords{Ekedahl-Oort stratification, Newton stratification, abelian varieties, positive characteristic}
\subjclass[2020]{14L05, 14G17, 11G10}
 
\maketitle

\section{Introduction}
Let $p$ be a prime number, $k$ an algebraically closed field of characteristic $p$, $ g \ge 1$ an integer and consider $\mathcal{A}_g \otimes k $, the moduli space of principally polarised abelian varieties of dimension $g$ defined over $k$, henceforth denoted by $\mathcal{A}_g$. The most natural method for studying $\mathcal{A}_g$ appears to be via the \textit{Ekedahl-Oort stratification}, first introduced by Oort \cite{oort2001stratification}, and based on understanding the $p$-torsion group scheme $X[p]$ of an abelian variety $X$ in characteristic $p$ through the action of Frobenius and Verschiebung on it. These actions are classified by an \textit{elementary sequence}, an isomorphism invariant of the abelian variety, with each Ekedahl-Oort stratum containing abelian varieties with a specific elementary sequence. A natural question is how this stratification restricts to important sub-schemes of $\mathcal{A}_g$. Perhaps, the most well-studied subscheme of $\mathcal{A}_g$ is the \textit{supersingular locus}, defined as
\begin{center}
    $\mathcal{S}_{g} := \{ (X, \lambda) \in \mathcal{A}_{g} \ \vert \ X  \ \text{is supersingular } \}$.
\end{center}

Supersingular abelian varieties have the same Newton polygon, the constant line segment of slope $\frac{1}{2}$. This allows us to view the supersingular locus as a \textit{Newton stratum}. The \textit{Newton stratification} of $\mathcal{A}_g$ is based on the isogeny class of the $p$-divisible group $X[p^{\infty}]$ of an abelian variety $X$. Given the close apparent relation between the quantities defining these stratifications, it is natural to ask how an Ekedahl-Oort stratum and Newton stratum in $\mathcal{A}_g$ intersect.  In dimension at most three, intersections are straightforward to determine due to the small number of strata. The intersections in dimension four (and $p$-rank zero) were classified in the work of Ibukiyama, Karemaker and Yu \cite{ibukiyama2022polarised}. In this paper we study intersections in dimension five, yielding in particular the Ekedahl-Oort stratification of $\mathcal{S}_5$.

\subsection{Main Result}
Throughout this section, we write $\mathcal{N}(\xi)$ for the Newton stratum of $\mathcal{A}_g$ consisting of abelian varieties whose Newton polygon is $\xi$ (see Section \ref{npstrat} for definitions).
We write $S_{\varphi}$ for Ekedahl-Oort stratum of $\mathcal{A}_g$ consisting of abelian varieties whose elementary sequence is $\varphi$ (see Section \ref{eostrat} for definitions).

\begin{thm} \label{mainthm}
  The intersections of Ekedahl-Oort strata and Newton strata of $\mathcal{A}_5$ are described in Table \ref{intstable}. All intersections are non-empty.
\end{thm}

\begin{table}
    \centering
    \begin{tabular}{|c|c|c|c|}
    \hline
    $g$ & $ f$ & $\xi$ & $ \varphi \ \text{such that} \ S_{\varphi} \cap \mathcal{N}(\xi) \ne \emptyset$ \\
    \hline 
    $5$ & $5$ & $5([1,0] + [0,1]) $ & $(1,2,3,4,5)$  \\
    \hline 
    $5$ & $4$ & $4([1,0] + [0,1]) + [1,1]$ &  $(1,2,3,4,4) $  \\
    \hline 
   $5$ & $3$ & $3([1,0] + [0,1]) + 2[1,1]$ & $(1,2,3,3,3), (1,2,3,3,4)$  \\
   \hline 
   $5$ & $2$ & $2[1,0] + [2,1] + [1,2] + 2[0,1]$ & $(1,2,2,3,3), (1,2,2,3,4) $  \\
    \hline 
    $5$ & $2$ & $2[1,0] + 3[1,1] + 2[0,1]$ & $(1,2,2,2,2), (1,2,2,2,3), (1,2,2,3,4)$   \\
    \hline 
    $5$ & $1$ & $[1,0] + [3,1] + [1,3] + [0,1]$ & $(1,1,2,3,3), (1,1,2,3,4)$  \\
    \hline 
    $5$ & $1$ & $[1,0] + [2,1] + [1,1] + [1,2] + [0,1]$ & $(1,1,2,2,2), (1,1,2,2,3), (1,1,2,3,4) $  \\
    \hline 
    $5$ & $1$ & $[1,0] + 4[1,1] + [0,1]$ & $(1,1,1,1,1),(1,1,1,1,2),(1,1,1,2,2),(1,1,1,2,3), $ \\ & & & $ (1,1,2,2,3), (1,1,2,3,4) $ \\ 
    \hline 
   $5$ & $0$ & $[4,1] + [1,4]$ & $(0,1,2,3,3),(0,1,2,3,4)$  \\
    \hline 
    $5$ & $0$ & $[3,1] + [1,1] + [1,3] $ & $(0,1,2,2,2),(0,1,2,2,3), (0,1,2,3,4)$  \\
    \hline
     $5$ & $0$ & $[2,1] + 2[1,1] + [1,2] $ & $(0,0,1,2,2),(0,0,1,2,3), (0,1,1,1,1), (0,1,1,1,2), $ \\  & & & $ (0,1,1,2,2), (0,1,1,2,3),  (0,1,2,2,3), (0,1,2,3,4) $  \\
     \hline 
     $5$ & $0$ & $[3,2] + [2,3] $ & $(0,0,1,1,1),(0,0,1,1,2),(0,0,1,2,3), (0,1,1,2,2),$ \\ & & &  $ (0,1,1,2,3),     (0,1,2,2,3), (0,1,2,3,4),$  \\
\hline 
$5$ & $0 $ & $5[1,1] $ & $(0,0,0,0,0), (0,0,0,0,1), (0,0,0,1,1),(0,0,0,1,2),$ \\ & & &  $(0,0,1,1,2), (0,0,1,2,3), (0,1,1,1,2), (0,1,1,2,2), $ \\  & & &  $(0,1,1,2,3), (0,1,2,2,3),  (0,1,2,3,4) $  \\
    \hline 

 \end{tabular}
      \caption{Intersections of the Ekedahl-Oort strata and Newton strata in dimension $5$.}
    \label{intstable}
    \end{table}
In particular, we recover the Ekedahl-Oort stratification of the supersingular locus. 
\begin{cor} The Ekedahl-Oort stratification of the supersingular locus is
 \begin{align*}
      \mathcal{S}_{5} = & S_{(0,0,0,0,0)} \cup S_{(0,0,0,0,1)} \cup S_{(0,0,0,1,1)} \cup S_{(0,0,0,1,2)} \cup (S_{(0,0,1,1,2)} \cap \mathcal{S}_{5}) \cup (S_{(0,1,1,2,3)}  \cup \mathcal{S}_{5}) \cup (S_{(0,0,1,2,3)} \cap \mathcal{S}_{5}) \\ &  \cup (S_{(0,1,1,2,2)} \cap \mathcal{S}_{5}) \cup (S_{(0,1,1,1,2)} \cap \mathcal{S}_{5})  \cup (S_{(0,1,2,2,3)} \cap  \mathcal{S}_{5}) \cup ( S_{(0,1,2,3,4)} \cap \mathcal{S}_{5})
      \end{align*}
      where all intersections appearing above are non-empty.
\end{cor}

The interaction between the Ekedahl-Oort stratification and the Newton stratification of $\mathcal{A}_5$ exhibits interesting behaviour that is not seen in lower dimension; see Remarks~\ref{rmk: TL not highest}, \ref{rmk: superspecial} and \ref{rmk: Newton sandwich}.

\subsection{Methodology}

The $p$-rank of an abelian variety $X/k$, is the unique integer $0 \le f(X) \le \text{dim}(X)$ such that $ \vert X[p](k) \vert = p^{f(X)} $. The \textit{$p$-rank f stratum} of $ \mathcal{A}_g$ is the subscheme consisting of all abelian varieties  of $p$-rank equal to $f$. The $p$-rank stratification is refined by both the Ekedahl-Oort and the Newton polygon stratifications (c.f Propositions \ref{pranksofnewtonpolygons} and \ref{EOfacts}) and thus it provides a natural stepping stone for studying intersections of the strata of the two. Moreover,  intersections between strata of positive $p$-rank reduce to those of lower dimension and $p$-rank zero, as explained in Section \ref{positivesections}, and thus this paper will largely focus on the $p$-rank zero case.

The main challenges of classifying intersections in dimension five and $p$-rank zero arise from the large number of possible intersections and limitations of current theoretical results in treating all possibilities. For instance, in the case of $\mathcal{S}_4$ there are eight Ekedahl-Oort strata which could intersect $\mathcal{S}_4$,  four of which are known to be fully contained in it by a classical result of Chai-Oort, two are ruled out using minimal $p$-divisible groups, which leaves two possible intersections to consider. A  key tool used to understand the remaining intersections is the $a$-number stratification of $\mathcal{S}_4$ and results of Harashita \cite{harashita2004number}; with this method being applicable only when each $a$-number stratum can intersect at most two strata. In particular, this cannot be used to decide any intersections in our problem. In dimension five, our strategy for classifying intersections is as follows.
\begin{itemize}
\item[1.] We use \textit{minimal $p$-divisible groups} and Proposition \ref{contaimentinsslocus} to find the Ekedahl-Oort strata which are fully contained in a Newton stratum.
\item[2.] We use \textit{first slopes} to deduce that certain intersections are non-empty, following Harashita \cite{harashitaslope}.
\item[3.] We compute elementary sequences attached to \textit{products} of abelian varieties, and use results from \textit{lower dimensions} to conclude that some intersections are non-empty. 
    \item[4.] To decide remaining intersections, we construct \textit{explicit Dieudonn{\'e} modules}, or prove that the relevant Dieudonn{\'e} modules do not exist.  
\end{itemize}
The \texttt{magma} code used for computations in this paper is publicly available in the \texttt{Github} repository:
\url{https://github.com/ElviraLupoian/StratificationIntersectionsInDimension5}

\subsection{Outline} This paper is organised as follows. In Section \ref{stratsdefs} we recall definitions of the stratifications appearing in this work. In Section \ref{intsec} we collect several results from literature which allow us to deduce that certain intersections are non-empty. 
In Section \ref{positivesections} we treat intersections of positive $p$-rank. In Section~\ref{sec: explicit} we discuss explicit Dieudonn\'{e} modules and in Section~\ref{prankzeroproof} we conclude the proof of Theorem~\ref{mainthm}.
  \begin{Ack}
This project began at the Arizona Winter school 2024 and we are grateful to the organisers. We thank Valentijn Karemaker for suggesting the project, for many helpful suggestions  throughout and for her careful reading of this paper. We thank Soumya Sankar for her help in Arizona and after. We thank Leo Mayer for his initial contribution to this project. The second  named author thanks Damiano Testa  for many helpful conversations, and Rachel Pries for her comments regarding explicit constructions. The first author is supported by grant VI.Vidi.223.028 of the Dutch Research Council (NWO). 
The second author is supported by the EPSRC Doctoral Prize fellowship EP/W524335/1 and the extension UKRI3030.
 \end{Ack}

\section{Stratifications of  $\mathcal{A}_{g}$} \label{stratsdefs}

Throughout this paper, $k$ is an algebraically closed field of characteristic $p$ and $X$ denotes an abelian variety of dimension $g$ over $k$.
\subsection{Invariants of abelian varieties}

We now discuss invariants of $X$ that will be important later.

\begin{defn}  The \textit{$p$-rank} of $X$ is an integer $0 \le f(X) \le g$ such that $\vert X[p](k) \vert = p^{f(X)}$.  
\end{defn}
  For any $0 \le f \le g$, the \textit{$p$-rank $f$ stratum} is 
     \begin{center}
         $V_f = \{ [X, \lambda] \in \mathcal{A}_g \ \vert \ f(X) = f \}$.
   \end{center}
 The $p$-rank strata are locally closed subschemes of $\mathcal{A}_g$ and they form a good stratification of $\mathcal{A}_g$.
\begin{defn}
    The $a$-number of $X$, $a(X)$, is the dimension of $\text{Hom}_{k}(\alpha_p, X)$, where $\alpha_p$ denotes the group scheme $\mathrm{Spec}(k[x]/(x^p))$.
 \end{defn}
 For any $0 \le a \le g$, the \textit{$a$-number stratum} of $\mathcal{A}_g$  defined by $a$ is \begin{center}
     $\mathcal{A}_g(a) = \{ [ X, \lambda] \in \mathcal{A}_g \ \vert \  a(X) = a\}. $
 \end{center}
 \subsection{Dieudonn{\'e} Theory} \label{ss: Dd}
 The two main stratifications studied in this paper are defined using finite group schemes and $p$-divisible groups. These can be classified using Dieudonn{\'e} modules, which we briefly review in this section, and we refer the reader to \cite[Section 15.3]{oort2001stratification} and references thereafter for precise details. 
 
 Let $W(k)$ be the ring of infinite Witt vectors with coordinates in $k$. We write
  \begin{center}
      $E_k := W(k)[ F, V]/ \langle Fa  - a^{\sigma} F, V a^{\sigma} - a V, FV - p, VF - p \ \vert \ a \in W(k) \rangle $
  \end{center}
 where $\sigma$ is the Frobenius morphism on $W(k)$, that is a lift of the Frobenius map $x \mapsto x^p$ on $k$.
 A \textit{Dieudonn{\'e} module} over $W(k)$ is a left $E_{k}$-module which is finitely generated as a $W(k)$-module. 

There are canonical categorical equivalences, both denoted by $\mathbb{D}$, from the category of $p$-torsion finite commutative group schemes (resp. $p$-divisible groups) over $k$ to the category of Dieudonn{\'e} modules over $W(k)$ that are of finite $W(k)$-length and are annihilated by $p$ (resp. that are free as $W(k)$-modules).

If $(X,\lambda)$ is a principally polarised abelian variety, then $\lambda$ restricts to a principal quasi-polarisation on the $p$-divisible group $X[p^\infty]$. Through the Dieudonn\'{e} functor, this gives rise to a non-degenerate, alternating, bilinear pairing $\langle \cdot , \cdot \rangle$ on $\mathbb{D}(X[p^\infty])$ satisfying $\langle Fx, y\rangle = \langle x, Vy\rangle^\sigma$ for every $x,y\in\mathbb{D}(X[p^\infty])$. We call a Dieudonn\'{e} module \textit{principally quasi-polarised} if it admits such a pairing. 

\subsection{Newton stratification} \label{npstrat}
The Newton stratification is defined using the $p$-divisible group $X[p^{\infty}]$ attached to an abelian variety $X/k$, and the associated Dieudonn{\'e} module, which we briefly review in this subsection, see \cite[Section 15.5]{oort2001stratification} for details.

Let $m,n$ be relatively prime, positive integers. The $p$-divisible group $G_{m,n}$ over $\mathbb{F}_p$ is defined by
$$\mathbb{D}(G_{m,n}) = E_{\mathbb{F}_p}/E_{\mathbb{F}_p}(F^m - V^n).$$
It follows from \cite[Section 4.3, Theorem 4.1]{yuri1963manin} that any $p$-divisible group $G$ over $k$ is isogenous to a product of the form 
\begin{center}
    $ G \sim_{k} \displaystyle \bigoplus_{i=1}^{t} G_{m_i, n_i}$
\end{center}
for a finite set $\{ (m_i, n_i) : 1 \le i \le t \}$ of pairs of co-prime positive integers.  We define the \textit{Newton polygon} $\NP(G)$ of $G$ defined as follows. We order the indices of  $\bigoplus_{i=1}^{t} G_{(m_i, n_i)}$, such that $\lambda_{i} = \frac{n_{i}}{m_{i} + n_{i}}$ for $ 1 \le i \le t$, are non-decreasing, that is,  $\lambda_{1} \le \ldots \le \lambda_{t}$. The \textit{Newton polygon} $\NP(G) = \sum_{i=1}^{t}[m_{i}, n_{i}]$ is the piecewise linear function which starts at $(0,0)$, ends at $(\sum_{i=1}^{t}(m_{i} + n_{i}), \sum_{i=1}^{t} n_{i})$ and has $(\sum_{i=1}^{j}(m_{i} + n_{i}), \sum_{i=1}^{j} n_{i})$ as its break points for $1 \le j \le t$.

In the case where $G = X[p^{\infty}]$ for some $x = (X, \lambda) \in \mathcal{A}_g$, the decomposition is symmetric, that is, 
\begin{center}
    $X[p^{\infty}] \sim_{k} \displaystyle \bigoplus_{i=1}^{t}\left(G_{a_i,b_i} \oplus G_{b_i, a_i}\right) \oplus G_{1,1}^{\oplus s} \oplus \left(G_{1,0} \oplus G_{0,1}\right)^{\oplus f}$
\end{center}
which is known as the \textit{formal isogeny type} of $X$. In terms of the Newton polygon, this means that the slopes $\lambda$ and $1-\lambda$ occur with the same multiplicity in $\NP(X[p^\infty])$ and we call such a Newton polygon \textit{symmetric}.

\begin{defn}
  The \textit{Newton polygon} of an abelian variety $X$ is $\NP(X) = \NP (X[p^{\infty}])$.
\end{defn}
\begin{defn}
Let $\xi$ be a symmetric Newton polygon from $(0,0)$ to $(2g,g)$. The \textit{Newton stratum} in $\mathcal{A}_{g}$ defined by $\xi$ is 
\begin{center}
    $\mathcal{N}(\xi) := \{ (X, \lambda) \in \mathcal{A}_{g} \ \vert \ \NP(X) = \xi \}$.
\end{center}
\end{defn}
Newton strata are non-empty (\cite[Page 98]{MR3077121}), they define a good stratification of $\mathcal{A}_{g}$, and for all $\xi \ne g[1,1]$, $\mathcal{N}(\xi) $ is geometrically irreducible (\cite[Theorem 3.1]{chai2011monodromy}). Moreover, the Newton stratification refines the $p$-rank stratification by the following. 
\begin{prop}\label{pranksofnewtonpolygons}
The $p$-rank of an abelian variety is the number of zero slopes in its Newton polygon.    
\end{prop}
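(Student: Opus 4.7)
The plan is to reduce to a computation on each isosimple summand of the formal isogeny type and then read off the answer. Recall that $f(X) = \dim_{\F_p} X[p](\bar{k})$ equals the height of the maximal étale quotient of $X[p^\infty]$. Since the height of the maximal étale quotient is invariant under isogeny of $p$-divisible groups and is additive over direct sums, it suffices to compute $f(G_{(m,n)})$ for each coprime pair $(m,n)$ appearing in the decomposition
\[
X[p^\infty] \;\sim_k\; \bigoplus_{i} G_{(m_i,n_i)},
\]
and to compare this with the slopes $\lambda_i = n_i/(m_i+n_i)$.

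Next, I would run through the three possibilities. If $(m,n) = (1,0)$ (slope $0$), then $G_{(1,0)} \isom \Q_p/\Z_p$ is étale of height $1$, so $G_{(1,0)}[p](\bar{k}) \isom \F_p$ and $f = 1$. If $(m,n) = (0,1)$ (slope $1$), then $G_{(0,1)} \isom \mu_{p^\infty}$ is of multiplicative type, and $\mu_p(\bar{k}) = \{1\}$ in characteristic $p$, so $f = 0$. For any other coprime pair $(m,n)$ with $m,n \ge 1$, the slope $n/(m+n)$ lies in $(0,1)$ and $G_{(m,n)}$ is local-local (both it and its Serre dual are connected), so $G_{(m,n)}[p](\bar{k}) = 0$ and $f = 0$. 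Summing over the formal isogeny type, the only contributions to $f(X)$ come from $G_{(1,0)}$ summands; each such summand contributes exactly one horizontal segment (a zero slope of length $1$) to $\NP(X)$. Hence $f(X)$ equals the number of zero slopes.

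The main obstacle is justifying the isogeny invariance of $p$-rank, equivalently that the slope-$0$ isotypic component of $X[p^\infty]$ coincides with its maximal étale quotient. Over a perfect field, the connected–étale sequence of a $p$-divisible group splits and the étale part is precisely the slope-$0$ part; any $k$-isogeny respects this decomposition up to isogeny, so heights of étale quotients match. Once this standard input is invoked, the remainder of the argument is a bookkeeping exercise identifying which isosimple summands carry slope $0$.
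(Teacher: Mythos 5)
Your argument is correct in substance and is genuinely more self-contained than the paper's, which simply defers to \cite[Remark 2.29]{karemaker2024geometry} and ultimately to Oort \cite[Remark 3.3]{oort1991moduli}: you actually prove the statement, by splitting $X[p^\infty]$ into isosimple isogeny factors, using that the height of the maximal \'etale quotient is an isogeny invariant that is additive over direct sums, and checking each factor separately. This is exactly the standard argument underlying the cited remark, so the two routes agree in content; yours buys a proof readable without chasing references, at the cost of having to justify the connected--\'etale splitting over a perfect field, which you do. One caveat on conventions: with the paper's normalisation, $G_{(m,n)}$ has dimension $m$ and is assigned slope $n/(m+n)$, so $G_{(1,0)}$ (dimension $1$, height $1$) is $\mu_{p^\infty}$ and $G_{(0,1)}$ is $\Q_p/\Z_p$ --- the opposite of your labelling. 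Taken literally against the paper's definitions, your case analysis computes that $f(X)$ equals the number of slope-$1$ segments. This does not damage the conclusion, because the Newton polygon of an abelian variety is symmetric, so $[1,0]$ and $[0,1]$ occur with equal multiplicity and the counts of slope-$0$ and slope-$1$ segments coincide; but you should either swap the two identifications to match the stated convention or invoke the symmetry explicitly at the end.
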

\begin{proof}
 As argued in \cite[Page 14, Remark 2.29]{karemaker2024geometry}, this follows from \cite[Remark 3.3]{oort1991moduli}.
\end{proof}
\begin{defn}
    The \textit{supersingular locus} of $\mathcal{A}_g$ is 
\begin{center}
    $\mathcal{S}_{g} := \{ (X, \lambda) \in \mathcal{A}_{g} \ \vert \ X  \ \text{is supersingular } \}.$
\end{center}
\end{defn}
An abelian variety $X$ is \textit{supersingular} if its Newton polygon is $\sigma_{g} = g[1,1]$. Therefore, the supersingular locus $\mathcal{S}_g$ is the Newton stratum $\mathcal{N}(\sigma_g)$.

\subsection{Ekedahl-Oort Stratification} \label{eostrat}
The Ekedahl-Oort stratification of $\mathcal{A}_g$ is based on the $p$-torsion subgroup schemes of its points. These are in turn related to Dieudonn{\'e} modules using $\text{BT}_1$s. 
\begin{defn}
A \textit{Barsotti-Tate truncated level $1$ group scheme} ( $\text{BT}_{1}$) over $k$, is a finite commutative group scheme $G$, defined over $k$ which satisfies:
\begin{itemize}
    \item[(i)] $\text{Im}( \mathrm{Ver} : G^{(p)} \rightarrow G) = \text{Ker}(\mathrm{Frob} : G \rightarrow G^{(p)});$
    \item[(ii)] $\text{Im}( \mathrm{Frob} : G \rightarrow G^{(p)}) = \text{Ker}(\mathrm{Ver} : G^{(p)} \rightarrow G),$ 
\end{itemize}
where $\mathrm{Frob}$ and $\mathrm{Ver}$ are the Frobenius and Verschiebung morphisms. 
\end{defn}

\begin{defn}
If $N$ is a $BT_1$ over $k$, we say $N$ is \textit{polarised} if $\mathbb{D}(N)$ is principally quasi-polarised. 

\end{defn}
\begin{defn} \label{esdefn}
 An \textit{elementary sequence} of length $g$ is a function  $\varphi : \lbrace 0 , \ldots, g \rbrace \longrightarrow \{ 0, \ldots, g\}$, satisfying: 
\begin{itemize}
    \item[(i)] $\varphi \left( 0 \right) = 0$;
    \item[(ii).] $\varphi \left( i \right) \le \varphi \left( i +1 \right) \leq \varphi \left( i \right) +1$ for all $0 \le i < g$.
\end{itemize}
We write  $\varphi$ as $(\varphi(1), \ldots, \varphi(g))$. The \textit{final sequence} stretched from $\varphi$ is the function $\psi : \{ 0, \ldots, 2g \} \rightarrow \{ 0, \ldots, 2g \}$ defined by $ \psi(2g -i) = g -i + \varphi(i)$ and $ \psi(i) = \varphi(i)$ for all $ 0 \le i \le g$.\end{defn}
Let $g \ge 1$ be an integer. We define a canonical map, $\text{ES}$, from the set of polarised $\text{BT}_{1}$s of length $2g$ over $k$, up to isomorphism, to the set of elementary sequences of length $g$, as follows. 
Let $G$ be a polarised $\text{BT}_{1}$ of length $2g$, defined over $k$. For any $k$-subgroup scheme $H \subseteq G$, and any finite word $w$ in $\mathrm{Ver}$ and $\mathrm{Frob}^{-1}$, we define $w \cdot H$ inductively by:
\begin{center}
    $\mathrm{Ver} \cdot H = \mathrm{Ver} ( H^{(p)}) $ and $ \mathrm{Frob}^{-1} \cdot H = \mathrm{Frob}^{-1}( H^{(p)} \cap \mathrm{Frob}(G))$.
\end{center}
We let $\psi : \{ 0, \ldots, 2g \} \rightarrow \{ 0, \ldots, 2g \}$ be the unique final sequence satisfying
\begin{center}
    $ \psi( \text{length}(w \cdot G)) = \text{length}( \mathrm{Ver} \cdot w \cdot G))$
\end{center}
any finite  word  $w$, and we set $\varphi(i) = \psi (i) $ for all $0 \le i \le g$. We define $\text{ES}(G) := \varphi $.

Conversely, for an elementary sequence $ \varphi$ of length $g$, we define a polarised $BT_1$, $(N_{\varphi}, \langle, \rangle_{\varphi})$ over $\mathbb{F}_p$ with $\text{ES}(N_{\varphi}) = \varphi$. For the rest of this subsection, we follow \cite[Section 9]{oort2001stratification} closely. Let $\psi$  be the final sequence stretched from $\varphi$ and write 
$$ 1 \le m_1 < \ldots  < m_g \le 2g $$
for the sequence of indices where $\psi$ jumps, that is, $\psi (i -1 ) < \psi (i)$, and 
$$ 1 \le n_g < \ldots < n_1 \le 2g$$
for the sequence of indices where $\psi$ is constant. Then $A_{\varphi} = \mathbb{D}(N_{\varphi})$ is the vector space over $\mathbb{F}_p$ with basis $\{ Z_1, \ldots, Z_{2g} \}$ defined by 
$$Z_{m_i} = X_i  \ \text{and} \ Z_{n_i} = Y_i \ \ \text{for} \ 1 \le i \le g.$$
We further define $ F: A_{\varphi} \rightarrow A_{\varphi}$ and $F : A_{\varphi} \rightarrow A_{\varphi}$ by 
$$ F(X_i) = Z_i, \; \; \; \; F(Y_i) =0, \; \; \; \; F(Z_i) =0, \; \; \; \; F(Z_{2g - i +1}) = \begin{cases}
        Y_i &  \text{if} \ Z_{2g - i + 1} \in \{ Y_g, \ldots, Y_1 \}; \\  
        - Y_i  & \text{if} \ Z_{2g - i + 1} \in \{ X_1, \ldots, X_g \} .
     \end{cases}$$
for all $ 1 \le i \le g$. An alternating pairing $\langle, \rangle_\varphi{} : A_{\varphi} \times A_{\varphi} \rightarrow k$ is defined by
$$ \langle X_i, Y_j \rangle_{\varphi} = \delta_{i,j}, \; \; \; \;  \langle X_i, X_j \rangle_{\varphi} = \langle Y_i, Y_j \rangle_{\varphi} =0 \ \text{for} \ 1 \le i \le g.$$ 
\begin{defn} \label{def: N_varphi}
 The pair $(N_{\varphi}, \langle, \rangle_{\varphi})$ is called the \textit{standard type} defined by $\varphi$.
\end{defn}
Notably, as shown in \cite[Section 9.1]{oort2001stratification} there is a canonical filtration on $N_{\varphi}$ and $\text{ES}(N_{\varphi}) = \varphi$. Moreover, it follows from this construction and \cite[Theorem 9.4]{oort2001stratification} that  $\text{ES}$ is a bijection.

For any $ x = (X, \mu) \in \mathcal{A}_{g}$, $X[p]$ is a polarised $\text{BT}_{1}$ and we set $ \text{ES}(x) := \text{ES}(X[p])$.
\begin{defn}
 Let $\varphi$ be an elementary sequence of length $g$. The \textit{Ekedahl-Oort} stratum defined by $\varphi$ is
 \begin{center}
     $S_{\varphi} := \{ x \in \mathcal{A}_{g} \ : 
     \text{ES}(x) = \varphi \}$.
 \end{center}
\end{defn}
The Ekedahl-Oort strata form a good stratification of $\mathcal{A}_{g}$ (\cite[Theorem 1.3]{oort2001stratification}) and we recall key properties.  

\begin{prop}{(Oort \cite[Theorems 1,2 and p.56]{oort2001stratification})} \label{EOfacts} 
Let $g \in \mathbb{Z}_{\ge 1}$ and $\varphi$ an elementary sequence of length $g$. 
\begin{itemize}
    \item[(i)] $S_{\varphi}$ is non-empty, quasi-affine and its dimension is $\sum_{i=1}^{g} \varphi \left( i \right)$.
    \item[(ii)] $S_{\varphi}$ is locally closed.
    \item[(iii)] The $p$-rank of a stratum $S_{\varphi}$ is $\max \{ i \ : \ \varphi \left( i\right) =i \}$.
    \item[(iv)] The $a$-number of the stratum $S_{\varphi}$ is  $g - \varphi \left( g \right)$.
    \end{itemize}
\end{prop}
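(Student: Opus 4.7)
The plan is to observe that all four statements are essentially collected from Oort's original paper \cite{oort2001stratification}, with (3) and (4) following from unwinding the combinatorial definition of the elementary sequence against the geometric meaning of the $p$-rank and $a$-number. So the proposed proof is a brief reduction to the literature, with a short combinatorial verification for the last two claims.

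For part (1), I would invoke Oort's structure theorem: since $\text{ES}$ is a bijection between isomorphism classes of symmetric $\text{BT}_1$s of length $2g$ over $k$ and elementary sequences of length $g$ (stated in Section \ref{eostrat}), every $\varphi$ is realised by some symmetric $\text{BT}_1$, which in turn occurs as $X[p]$ for some principally polarised $X$; this gives non-emptiness. Quasi-affineness and the dimension formula $\dim S_{\varphi} = \sum_{i=1}^{g}\varphi(i)$ are the main theorems of \cite{oort2001stratification}, proved there by constructing, locally around each point of $S_\varphi$, a smooth chart on which the canonical filtration can be rigidified and a quasi-affine cover of $S_\varphi$ can be written down explicitly. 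Part (2) then follows because the map $(X,\lambda) \mapsto [X[p]]$ (taking the $p$-torsion up to isomorphism) is constructible, so the preimage of a single isomorphism class is a constructible (in fact locally closed) subset.

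For (3) and (4), the plan is to match the combinatorics of the final sequence $\psi$ to the standard $F$, $V$ structure on $X[p]$. Recall $\varphi(i) \le \varphi(i+1) \le \varphi(i)+1$, and write $m = \max\{i : \varphi(i) = i\}$. The canonical filtration $\{G_i\}$ with $V \cdot G_i = G_{\psi(i)}$ has $V$ acting as an isomorphism exactly on the portion where $\psi$ restricts to the identity on $\{0,\ldots,m\}$, and on this portion $G_m$ is the étale–multiplicative (i.e.\ $V$-bijective) part of $X[p]$. Its $k$-points contribute $p^{m}$ to $|X[p](k)|$, giving $f(X) = m$, which is (3). For (4), the $a$-number satisfies $a(X) = g - \dim V(X[p])$ since $V(X[p])$ is the image of Verschiebung and $\ker F \cap \ker V$ is precisely the $\alpha_p$-part; by the definition of the elementary sequence, $\dim V(X[p]) = \varphi(g)$, yielding $a(X) = g - \varphi(g)$.

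The main obstacle is really only the bookkeeping required in parts (3) and (4) to confirm that the two combinatorial conventions (the final sequence $\psi$ stretched from $\varphi$, and the filtration indexed by length) agree with the geometric meaning of $p$-rank and $a$-number; no subtle deformation theory is needed beyond citing Oort. Since this is a recollection section, the expected write-up is short: a pointer to \cite{oort2001stratification} for (1) and (2), and a two-line verification for (3) and (4), which are also carefully discussed in \cite[Chapter 2]{karemaker2024geometry}.
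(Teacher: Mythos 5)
Your proposal is correct and takes essentially the same route as the paper, which simply cites \cite{oort2001stratification} for all four parts (Theorem 1.2 for (1) and (2), and page 56 for (3) and (4)). Your additional combinatorial sketch of why the $p$-rank equals $\max\{i : \varphi(i)=i\}$ and the $a$-number equals $g-\varphi(g)$ is consistent with the standard identifications and goes slightly beyond what the paper records, but it is not a different approach.
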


\section{Intersection Tools}
\label{intsec}
\subsection{Containment in $\mathcal{S}_g$, first slopes and $a$-numbers}
We collect several results from literature which allow us to conclude that certain intersections are non-empty.
\begin{prop}{({\rm Chai-Oort} \cite[Theorem 4.8]{chai2011monodromy})} \label{contaimentinsslocus} Let $\varphi$ be an elementary sequence of length $g$. Then 
$$ S_{\varphi} \subseteq \mathcal{S}_{g} \iff  \varphi\left(\floor{\frac{g+1}{2}}\right) =0.$$
\end{prop}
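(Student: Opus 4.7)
The plan is to prove the two directions separately, using the symmetry of the Newton polygon of a principally polarised abelian variety together with the relation between Ekedahl--Oort type and Newton slopes.

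\textbf{Sufficiency} ($\varphi(\lfloor(g+1)/2\rfloor)=0 \Rightarrow S_\varphi \subseteq \mathcal{S}_g$). Since $\varphi$ is non-decreasing with $\varphi(0)=0$, the hypothesis forces $\varphi(i)=0$ for all $0\le i\le \lfloor(g+1)/2\rfloor$. Take any $(X,\lambda)\in S_\varphi$. The canonical filtration $0 = G_0 \subset G_1 \subset \cdots \subset G_{2g}=X[p]$ from Section \ref{eostrat} satisfies $V\cdot G_i = G_{\psi(i)}$; the vanishing condition on $\varphi$ says that the first $\lfloor(g+1)/2\rfloor$ steps are entirely of ``$F$-type'' (i.e.\ contained in $\ker F$). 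Pushing this through Harashita's first slope inequality (the forthcoming Proposition \ref{firstslope}), which bounds the first Newton slope of $X$ from below in terms of $\varphi$, one checks numerically that the lower bound evaluates to $\ge 1/2$. Because $\NP(X)$ is symmetric with endpoint $(2g,g)$ and its slopes are non-decreasing, having first slope $\ge 1/2$ forces every slope to equal $1/2$, i.e.\ $X$ is supersingular.

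\textbf{Necessity} ($S_\varphi\subseteq\mathcal{S}_g \Rightarrow \varphi(\lfloor(g+1)/2\rfloor)=0$). Assume instead that $\varphi(\lfloor(g+1)/2\rfloor)\ge 1$; I will exhibit a non-supersingular $(X,\lambda)\in S_\varphi$. By Oort's theorem on minimal $p$-divisible groups, to each elementary sequence $\varphi$ there is attached a canonical minimal $p$-divisible group $H(\varphi)$ whose $p$-torsion is the symmetric $\mathrm{BT}_1$ of type $\varphi$ and whose Newton polygon is explicitly computable from the combinatorics of $\varphi$. A direct inspection (or, equivalently, the inductive product description of Section \ref{inducsec}: if $\varphi = \varphi_1 \oplus \varphi_2$ with one factor not satisfying the middle-vanishing condition, then an ordinary or non-supersingular factor is visible) shows that $\NP(H(\varphi))$ has a slope strictly less than $1/2$ precisely when $\varphi(\lfloor(g+1)/2\rfloor)\ge 1$. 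Choosing a principally polarised abelian variety $X$ with $X[p^\infty]\cong H(\varphi)$ produces the required point of $S_\varphi\setminus \mathcal{S}_g$.

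\textbf{Main obstacle.} The delicate step is the sufficiency: turning the purely combinatorial ``smallness at the middle index'' into the geometric first-slope bound. In particular, one must verify that Harashita's bound does evaluate to $1/2$ exactly under the middle-vanishing hypothesis and strictly less otherwise; this is a careful Hodge--Newton style argument on the Dieudonn\'e module of $X[p]$, and it is what Chai--Oort establish via their stratified analysis in \cite{chai2011monodromy}. The necessity direction is comparatively routine once Oort's minimal $p$-divisible groups are available, though an honest proof still requires verifying that the Newton polygon of $H(\varphi)$ is computable by the claimed recipe.
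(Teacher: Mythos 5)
The paper does not actually prove this proposition: it is quoted verbatim from Chai--Oort \cite[Theorem 4.8]{chai2011monodromy}, so there is no internal argument to match yours against. Judged on its own terms, your outline of the sufficiency direction is a reasonable reduction but stops short of the actual content. Since the Newton polygon of a principally polarised abelian variety is symmetric with endpoint $(2g,g)$, one indeed has $Z_{1/2}=\mathcal{S}_g$, and Harashita's Proposition \ref{firstslope} (which is an if-and-only-if) then reduces the \emph{entire} proposition, both directions at once, to the purely combinatorial claim that $\lambda_\varphi\ge 1/2$ exactly when $\varphi\bigl(\floor{\frac{g+1}{2}}\bigr)=0$. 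You assert this ("one checks numerically") but never prove it; that verification, involving the iterates of the map $\Phi$ and the density of $C$ in $D$, is the whole proof once the reduction is made, and you yourself flag it as the main obstacle. So the sufficiency direction is a plan, not a proof. (One should also note that this route proves Chai--Oort's theorem via Harashita's later, finer result, which is logically admissible but is not how the cited theorem was originally obtained.)

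The necessity direction contains a genuine error rather than just a gap. Oort's minimal $p$-divisible groups $G(\xi)$ are indexed by Newton polygons $\xi$, not by elementary sequences: the associated sequences $\varphi_\xi$ form a very small subset of all elementary sequences (compare Table \ref{minimaleltseq}, which lists only nine of them in dimension $5$ against $32$ sequences of the relevant $p$-ranks). For a general $\varphi$ with $\varphi\bigl(\floor{\frac{g+1}{2}}\bigr)\ge 1$ there is no minimal $p$-divisible group $H(\varphi)$ whose $p$-torsion has type $\varphi$, so the construction "choose $X$ with $X[p^\infty]\cong H(\varphi)$" cannot be carried out. The repair is available and cheaper than what you attempt: since $Z_{1/2}=\mathcal{S}_g$, the converse direction of Proposition \ref{firstslope} gives $S_\varphi\not\subseteq\mathcal{S}_g$ whenever $\lambda_\varphi<1/2$, so necessity follows from the same unproven combinatorial equivalence with no appeal to minimal $p$-divisible groups at all.
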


For other $p$-rank zero strata, we have an analogous result due to Harashita, which we summarise here. Associated to any elementary sequence $\varphi$ of length $g$ there is a rational number $\lambda_{\varphi}$, constructively defined as follows. Let $\psi$ be the final sequence stretched from $\varphi$ and define the map 
\begin{center}
 $\Phi : \{ 1, \ldots, 2g\} \rightarrow \{ 1, \ldots, 2g \}$   
\end{center}
by $ \Phi(i) = \psi(i)$ if $ \psi(i) \ne 0$ and $\Phi(i) = g + i$ otherwise. We set 
\begin{center}
    $D = \displaystyle \bigcap_{i=1}^{\infty} \text{Im}(\Phi^{i})$ and $C = D \cap \{ g+1, \ldots, 2g\}$
\end{center}
where $\Phi^{i}$ denotes the $i$th iterate of $\Phi$. Then $\lambda_{\varphi} = \vert C \vert / \vert D \vert $. 
\begin{defn}
The \textit{first Newton slope} of an elementary sequence $\varphi$ is the constant $\lambda_{\varphi}$ constructed as above.
\end{defn}
\begin{eg}
  If $\varphi = (0,1,2,3,4)$, we find 
  $$ \Phi : (1,2,3,4,5,6,7,8,9,10) \mapsto (6,1,2,3,4,4,4,4,4,5),$$
and $\text{Im}(\Phi^{i+5k}) = \{ 1,2,3,4,6\} $ for all $2 \le i \le 6$, $k$ a positive integer. Thus $\lambda_{\varphi} = 1/5$. We computed the first Newton slopes of all elementary sequences of length $5$ and p-rank $0$, and our computations are summarised in  Table \ref{tab:firstslopes}.
\end{eg}
\begin{table}
    \centering
    \begin{tabular}{|c|c|}
    \hline
    $\varphi$ &  $\lambda_{\varphi}$ \\
    \hline 
    $(0,1,2,3,3), (0,1,2,3,4) $ & $1/5$   \\
    $(0,1,2,2,2), (0,1,2,2,3) $ & $1/4 $ \\
    $ (0,0,1,2,2), (0,0,1,2,3), (0,1,1,1,1), (0,1,1,1,2),  (0,1,1,2,2), (0,1,1,2,3) $   & $1/3$ \\
    $(0,0,1,1,1), (0,0,1,1,2)  $  & $ 2/5 $ \\
    $ (0,0,0,0,0), (0,0,0,0,1), (0,0,0,1,1), (0,0,0,1,2)  $ & $1/2$ \\
    \hline     
    \end{tabular}
      \caption{First Newton slopes of Ekedahl-Oort strata in dimension $5$ and $p$-rank $0$}
    \label{tab:firstslopes}
    \end{table}
\begin{defn}
The \textit{first slope} of a Newton polygon is the smallest non-zero slope of the polygon.     
\end{defn}
For any $\lambda \in \mathbb{Q}$, Harashita defines a locus of $\mathcal{A}_g$
\begin{center}
    $Z_{\lambda} := \{ ( X, \mu) \in \mathcal{A}_{g}  \ \vert \ \text{the first slope of $X$ is at least} \  \lambda \}.$
\end{center}

\begin{prop}{({\rm Harashita} \cite[Corollary 4.2]{harashitaslope})} \label{firstslope}
Let $\varphi$ be an elementary sequence of length $g$ and  $\lambda \in \Q$. Then
$$S_{\varphi} \subseteq Z_{\lambda} \Leftrightarrow \lambda_{\varphi} \ge \lambda.$$
\end{prop}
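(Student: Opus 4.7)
The plan is to translate the statement into Dieudonné-module language and then exploit Oort's minimal $p$-divisible groups. Fix an algebraically closed $k$ of characteristic $p$, and for any $(X,\mu)\in\calA_g$ let $M=\bbD(X[p^{\infty}])$ be its contravariant Dieudonné module over $W(k)$, with semilinear $F$ and $V$ satisfying $FV=VF=p$. The $\mathrm{BT}_1$ structure on $X[p]$ is recovered from $(M/pM,\bar F,\bar V)$, and by the bijection $\mathrm{ES}$ the elementary sequence $\varphi$ determines, up to isomorphism, a canonical pair $(\bar F,\bar V)$ on $M/pM$. The first Newton slope of $X$ is, by Dieudonné-Manin, the minimal slope of $F$ on the isocrystal $M[1/p]$.

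The two directions of the proposition will be treated separately. For the easy direction, assume $\lambda_{\varphi}<\lambda$; then I want to exhibit a single $X\in S_{\varphi}$ with first Newton slope strictly below $\lambda$, which immediately forces $S_{\varphi}\not\subseteq Z_{\lambda}$. For this, I would take the minimal $p$-divisible group $H(\varphi)$ attached to $\varphi$ in the sense of Oort: its Dieudonné module admits a basis $e_1,\dots,e_{2g}$ of $M/pM$ indexed so that $\bar F$ and $\bar V$ act by the permutation read off from the final sequence $\psi$, and this permutation lifts canonically to $M$ itself by sending $e_i\mapsto e_{\psi(i)}$ (with appropriate factors of $p$ where $\psi$ sends an index into or out of the kernel of $\bar F$). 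Tracing through the definition of $\Phi$, this is exactly the map that encodes how $F^{-1}$ composed with appropriate $V$-factors moves basis vectors; the stable image $D=\bigcap_i\im(\Phi^i)$ is the union of cycles on which $F$ acts periodically, and $|C|/|D|$ counts the density of $V$-factors in one full cycle, so computing the characteristic polynomial of some power of $F$ on the $D$-part gives a single Newton slope equal to $\lambda_{\varphi}$.

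For the substantive direction, assume $\lambda_{\varphi}\ge\lambda$; I must show that \emph{every} $X$ with $\mathrm{ES}(X)=\varphi$ has first Newton slope at least $\lambda$. The strategy is to argue that $H(\varphi)$ is minimal among all $p$-divisible groups with this $\mathrm{BT}_1$ truncation in the precise sense that its Newton polygon lies on or below every other such Newton polygon (this is the content of Oort's minimality, which the paper already invokes). Combining this with Grothendieck's specialisation theorem, which says Newton polygons only go up under specialisation, the first slope of any deformation of $H(\varphi)$ is $\ge\lambda_{\varphi}$, and every $X$ with $\mathrm{ES}(X)=\varphi$ is (geometrically) such a deformation after passing to a suitable family. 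One then concludes $S_\varphi\subseteq Z_{\lambda_\varphi}\subseteq Z_\lambda$.

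The main obstacle is the second direction, specifically the identification of $\lambda_\varphi$ with the actual first slope of $H(\varphi)$ from the purely combinatorial data of $\Phi$, $D$ and $C$. This requires keeping track carefully of when the iteration $\Phi^i$ passes through indices in $\{1,\dots,g\}$ versus $\{g+1,\dots,2g\}$, i.e.\ whether the corresponding step on the basis corresponds to an $F$-move or a $V$-move, because each $V$-move contributes a factor of $p$ to the eigenvalue on the cycle while each $F$-move contributes nothing. Once this bookkeeping is set up, both directions follow cleanly; without it, the ratio $|C|/|D|$ is just a combinatorial invariant with no a priori meaning.
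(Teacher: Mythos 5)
The paper does not actually prove this proposition --- it is imported verbatim from Harashita \cite[Corollary 4.2]{harashitaslope} --- so the comparison is between your sketch and Harashita's argument. Your first direction (producing a point of $S_{\varphi}$ whose first slope is exactly $\lambda_{\varphi}$ by writing down the standard Dieudonn\'e module in which $F$ and $V$ permute a basis according to the stretched final sequence, and reading the slope off the cycles of $\Phi$ weighted by the density of $V$-moves) is essentially Harashita's construction and is sound, modulo the bookkeeping you yourself flag, and modulo one terminological slip: this object is not in general ``the minimal $p$-divisible group in the sense of Oort,'' since Oort's minimal groups $G(\xi)$ are attached to Newton polygons and their $p$-kernels realize only the special sequences $\varphi_{\xi}$, whereas the standard lift exists for every $\varphi$.

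The substantive direction is where the proposal breaks. First, the statement you attribute to Oort's minimality --- that the Newton polygon of $H(\varphi)$ lies on or below that of every $p$-divisible group with the same $p$-kernel --- is not what Oort's minimality theorem says (that theorem only rigidifies the isomorphism class of the whole $p$-divisible group when the $p$-kernel is that of some $G(\xi)$); for a general $\varphi$ the existence of such a lower bound is essentially the assertion to be proved. Second, the specialization argument runs the wrong way: Grothendieck's theorem says Newton polygons go \emph{up} under specialization, so a deformation (generization) of $H(\varphi)$ has Newton polygon on or \emph{below} that of $H(\varphi)$ and hence first slope $\le \lambda_{\varphi}$, the opposite of the inequality you need; moreover the claim that every $X$ with $\mathrm{ES}(X)=\varphi$ sits in a family with $H(\varphi)$ in the required position is unsubstantiated. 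Harashita's actual proof of the lower bound is a pointwise Dieudonn\'e-module argument: for an arbitrary lattice whose reduction has final type $\varphi$, the canonical filtration yields divisibility estimates on iterates of $F$ that bound the smallest slope from below by $\vert C \vert / \vert D \vert$. Some argument of that kind (or a direct appeal to the literature) is required; the minimality-plus-specialization route as written does not close the gap.
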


In the case of dimension $5$ and $p$-rank zero, each Newton polygon has a unique first slope, and hence we will use the following observation to conclude that certain intersections are non-empty. 
\begin{cor} \label{firstslopecor}
Let $\varphi$ be an elementary sequence of length $g$. Suppose that there is a unique Newton stratum $\mathcal{N} \subset \mathcal{A}_g$  whose defining Newton polygon has first slope $\lambda_{\varphi}$. Then $S_{\varphi} \cap \mathcal{N} \ne \emptyset$.
\end{cor}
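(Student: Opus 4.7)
The plan is to extract from Harashita's biconditional in Proposition~\ref{firstslope} not merely the inclusion $S_{\varphi} \subseteq Z_{\lambda_{\varphi}}$, but the finer information that some point of $S_{\varphi}$ has first Newton slope \emph{equal} to $\lambda_{\varphi}$. Once this is in hand, the uniqueness hypothesis forces that point to lie in $\mathcal{N}$.

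More concretely, I would first invoke Proposition~\ref{EOfacts}(1) to know that $S_{\varphi}$ is non-empty, and Proposition~\ref{firstslope} applied with $\lambda = \lambda_{\varphi}$ to conclude that every $(X,\mu) \in S_{\varphi}$ has first Newton slope at least $\lambda_{\varphi}$. Next, for any rational $\lambda' > \lambda_{\varphi}$, the \emph{converse} direction of Proposition~\ref{firstslope} yields $S_{\varphi} \not\subseteq Z_{\lambda'}$; hence there exists a point of $S_{\varphi}$ whose first slope is strictly less than $\lambda'$.

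To isolate the value $\lambda_{\varphi}$ exactly, I would use the fact that only finitely many symmetric Newton polygons of dimension $g$ occur on $\mathcal{A}_{g}$, so the set of first slopes attained is a finite subset of $\mathbb{Q}$. Choosing $\lambda'$ in the (non-empty) gap between $\lambda_{\varphi}$ and the next larger attainable first slope produces a point of $S_{\varphi}$ whose first slope lies in $[\lambda_{\varphi}, \lambda')$ and therefore equals $\lambda_{\varphi}$. By the uniqueness assumption, the Newton polygon of that point must be the polygon defining $\mathcal{N}$, so the point lies in $S_{\varphi} \cap \mathcal{N}$, which is thus non-empty. There is no real obstacle here; the only subtlety is noticing that the finiteness of the set of attainable first slopes turns the strict-inequality information from the biconditional into an equality.
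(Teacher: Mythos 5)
Your proof is correct and follows essentially the same route as the paper's: both arguments combine the biconditional in Proposition~\ref{firstslope} with the uniqueness hypothesis to force a point of $S_{\varphi}$ to have first slope exactly $\lambda_{\varphi}$. The paper phrases it as a one-line contradiction (if the intersection were empty then $S_{\varphi}\subset Z_{\lambda'}$ for some $\lambda'>\lambda_{\varphi}$), while you unwind it into a direct argument and make explicit the finiteness of attainable first slopes that the paper leaves implicit.
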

\begin{proof}
 Suppose that the intersection is empty. Then $S_{\varphi} \subset Z_{\lambda'}$ for some $\lambda' > \lambda_{\varphi}$, and this contradicts the definition of $Z_{\lambda}$ and Proposition \ref{firstslope}.
\end{proof}

Lastly, we state a result regarding intersections of Newton polygon strata and the Ekedahl-Oort stratum of $a$-number $1$. 
\begin{prop} \label{anumber1intersections}
Let $S_{\varphi} \subset \mathcal{A}_g$ be the unique Ekedahl-Oort stratum of $a$-number $1$ and $p$-rank $f$. For any Newton stratum $\mathcal{N} \subset \mathcal{A}_g$ of $p$-rank $f$, $ \mathcal{N} \cap S_{\varphi}$ is non-empty.    
\end{prop}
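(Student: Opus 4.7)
The plan is to reduce the density statement to a single non-trivial claim: that the generic point of each Newton polygon stratum of $p$-rank $f < g$ has $a$-number $1$. First I would pin down $\varphi$ explicitly. By Proposition \ref{EOfacts}, the conditions ``$p$-rank $f$'' and ``$a$-number $1$'' translate to $\max\{i : \varphi(i) = i\} = f$ and $\varphi(g) = g-1$. Combined with $\varphi(0) = 0$ and $\varphi(i) \le \varphi(i+1) \le \varphi(i) + 1$, this forces the value $f$ at $i = f+1$ (the single ``skip'') and $+1$ increments afterwards, so $\varphi = (0,1,\ldots,f,f,f+1,\ldots,g-1)$ is uniquely determined. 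A direct computation also gives $\dim S_\varphi = \sum_{i=1}^g \varphi(i) = \binom{g}{2} + f$, which matches $\dim V_f$ by the Norman--Oort theorem on $p$-rank strata.

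Now fix a Newton polygon stratum $\mathcal{N}$ of $p$-rank $f$; by Proposition \ref{pranksofnewtonpolygons} it is contained in $V_f \setminus V_{f-1}$. Since $\varphi$ is the \emph{unique} elementary sequence of $p$-rank $f$ and $a$-number $1$, the set $\mathcal{N} \cap S_\varphi$ coincides with $\{X \in \mathcal{N} : a(X) = 1\}$, which is open in $\mathcal{N}$ by upper semi-continuity of the $a$-number. When $\mathcal{N} \neq \mathcal{S}_g$, the stratum $\mathcal{N}$ is geometrically irreducible by \cite{chai2011monodromy}, and density is equivalent to non-emptiness of this open locus. For the supersingular case $\mathcal{N} = \mathcal{S}_g$ (which may be reducible) one needs the same non-emptiness statement on each irreducible component, which is part of the known structural analysis of the supersingular locus.

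The crux is therefore to exhibit, in any Newton polygon stratum of $p$-rank $f < g$, a point with $a$-number $1$. One has $a(X) \ge 1$ on $\mathcal{N}$ automatically, since $a(X) = 0$ forces $X$ to be ordinary and hence of $p$-rank $g$. The matching upper bound for the generic $a$-number comes from the deformation theory of $p$-divisible groups along a Newton polygon stratum: any abelian variety in $\mathcal{N}(\xi)$ can be deformed, keeping the Newton polygon constant, to one whose $a$-number attains the generic (minimal) value $1$, so the generic point of $\mathcal{N}$ lies in $S_\varphi$. This genericity input, that the $a$-number drops all the way to $1$ on a non-empty open subset of each Newton polygon stratum, is the principal obstacle and the only ingredient beyond formal bookkeeping; it relies on the fact that specialization within a Newton polygon stratum can only increase the $a$-number, so the minimum over $\mathcal{N}$ is realized on an open dense subset, which by the uniqueness of $\varphi$ must equal $\mathcal{N} \cap S_\varphi$.
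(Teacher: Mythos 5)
Your reduction is correct and is essentially the paper's argument: the paper's proof is a two-line citation, handling $\mathcal{N}=\mathcal{S}_g$ via Li--Oort's result that every irreducible component of the supersingular locus is generically of $a$-number $1$, and the remaining strata via the irreducibility theorem of Chai--Oort together with Oort's result that the generic point of each Newton polygon stratum has $a$-number $1$ --- exactly the two inputs you isolate. Be aware, though, that your closing semicontinuity remark only shows that the minimal $a$-number on $\mathcal{N}$ is attained on a dense open subset, not that this minimum equals $1$; that genericity statement is precisely the content of the cited theorems and is asserted rather than derived in your write-up.
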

\begin{proof}
When $\mathcal{N} = \mathcal{S}_g$ and $f=0$, Li and Oort prove that $\mathcal{S}_g \cap S_{\varphi}$ is dense in $\mathcal{S}_g$ \cite[Theorem 4.9(iii)]{li1680f}. When $\mathcal{N} \ne \mathcal{S}_g$, Chai and Oort prove that $\mathcal{N}$ is geometrically irreducible (\cite[Theorem 3.1]{chai2011monodromy}) and the $a$-number of the generic point $(X, \lambda) \in \mathcal{N}$ is at most one by \cite[Remark 5.4]{oort2000newton}. Since $S_{\varphi}$ is the unique Ekedahl-Oort stratum of $p$-rank $f$ and $a$-number $1$, the result follows.
\end{proof}

\subsection{Inductive Arguments} \label{inducsec}
For two $p$-divisible groups $H$ and $H^{'}$, with corresponding Newton polygons $\xi$ and $\xi'$, the direct sum $H \oplus H'$ is canonically a $p$-divisible group, and its Newton polygon $\xi \oplus \xi'$ is obtained in the canonical manner. Similarly, for two polarised $BT_1$s, $G$ and $G'$, the direct sum $G \bigoplus G'$ is canonically a polarised $BT_1$. If $\varphi$ and $\varphi'$ are the corresponding elementary sequences, we write  $\varphi \oplus \varphi'$  for the elementary sequence of $G \oplus G'$. We make the following observation. 
\begin{prop} \label{inductiveprop}
  Let $\varphi$ and $\varphi'$ be two elementary sequences of length $g$ and $g'$, and $\mathcal{N}(\xi) \subseteq \mathcal{A}_{g}$, $ \mathcal{N}(\xi') \subseteq \mathcal{A}_{g'}$ two Newton strata such that $S_{\varphi} \cap \mathcal{N}(\xi) \neq \emptyset$ and $S_{\varphi'} \cap \mathcal{N}( \xi') \neq \emptyset$. Then $S_{\varphi \oplus \varphi'} \cap \mathcal{N}( \xi \oplus \xi')  \ne \emptyset$.
   \end{prop}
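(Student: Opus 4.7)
The plan is to exhibit a single abelian variety of dimension $g+g'$ lying in both strata on the right-hand side, obtained simply as the product of representatives from the two given intersections.

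First, using the hypotheses, I would pick $(X,\lambda) \in S_{\varphi} \cap \mathcal{N}(\xi) \subseteq \mathcal{A}_{g}$ and $(Y,\lambda') \in S_{\varphi'} \cap \mathcal{N}(\xi') \subseteq \mathcal{A}_{g'}$. Form the product principally polarised abelian variety $(X \times Y, \lambda \times \lambda')$, which is a point of $\mathcal{A}_{g+g'}$. The plan is to verify that this point lands in $S_{\varphi \oplus \varphi'} \cap \mathcal{N}(\xi \oplus \xi')$.

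For the Newton polygon condition, I would use the canonical identification $(X \times Y)[p^{\infty}] = X[p^{\infty}] \oplus Y[p^{\infty}]$. The formal isogeny type of a direct sum of $p$-divisible groups is the (multiset) union of the individual isogeny types, so the associated Newton polygon is precisely $\xi \oplus \xi'$ (re-ordered by increasing slope, as built into the definition in Section \ref{npstrat}). Hence $(X \times Y, \lambda \times \lambda') \in \mathcal{N}(\xi \oplus \xi')$. For the Ekedahl-Oort condition, I would use the canonical identification $(X \times Y)[p] = X[p] \oplus Y[p]$ as symmetric $\mathrm{BT}_1$ group schemes. By definition of the operation $\oplus$ on elementary sequences introduced just before the statement, $\mathrm{ES}(X[p] \oplus Y[p]) = \varphi \oplus \varphi'$, so $(X \times Y, \lambda \times \lambda') \in S_{\varphi \oplus \varphi'}$.

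Since the two conditions hold simultaneously for this single point, the intersection $S_{\varphi \oplus \varphi'} \cap \mathcal{N}(\xi \oplus \xi')$ is non-empty. There is no real obstacle here: the only thing worth being careful about is that both the Newton polygon and the Ekedahl-Oort type are genuinely compatible with direct sums on the level of $p$-divisible groups and $\mathrm{BT}_1$ group schemes respectively, and that the product polarisation $\lambda \times \lambda'$ keeps us inside $\mathcal{A}_{g+g'}$. Both points are built into the definitions recalled in Section \ref{stratsdefs} and the discussion of $\oplus$ preceding the proposition, so the argument reduces to naming the obvious product point.
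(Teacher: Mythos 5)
Your argument is correct and is essentially the paper's own proof: the paper simply records the inclusions $S_{\varphi} \times S_{\varphi'} \subseteq S_{\varphi \oplus \varphi'}$ and $\mathcal{N}(\xi) \times \mathcal{N}(\xi') \subseteq \mathcal{N}(\xi \oplus \xi')$, which is exactly your product-point construction stated at the level of strata. You spell out the compatibility of $p$-divisible groups and $\mathrm{BT}_1$s with products in more detail, but the content is the same.
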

\begin{proof}
 This follows from the fact that $S_{\varphi} \times S_{\varphi'} \subseteq S_{\varphi \oplus \varphi'}$  and $\mathcal{N}(\xi) \times \mathcal{N}(\xi') \subseteq \mathcal{N}( \xi \oplus \xi')$.   
\end{proof}

To compute $ \varphi \oplus \varphi'$ in explicit examples we review several results from \cite[Section 4]{harashita2009configuration}. 
\begin{defn}
  A \textit{final type} of length $d$ is a pair $(B, \delta)$, where:
  \begin{itemize}
      \item $B = \{ b_{1} < \ldots < b_{d} \}$ is totally ordered finite set;
      \item $ \delta: B \rightarrow \{ 0, 1 \}$ is a map.
      \end{itemize}
 A final type $(B, \delta)$ is \textit{symmetric} if $ d= 2g$ for some $g \ge 1$ and it satisfies $\delta(b_{i}) + \delta(b_{2g +1 - i}) =1$ for $1 \le i \le g $. 
\end{defn}
For any final sequence $\psi$ we define a final type by setting $\delta(b_{i}) = 1 - \psi(i) + \psi( i-1)$, and note that this defines a bijection between the set of (symmetric) final sequences and the set of (symmetric) final types.  Let $\psi$ be a a final sequence of length $d$ and $(B, \delta)$ its associated final type, with $B = \{ b_{1} < \ldots < b_{d} \}$. We define a map $\pi_{\delta} : B \rightarrow B$ by 

\begin{center}
    $\pi_{\delta} \left( b_{i} \right)  = \left\{
	\begin{array}{ll}
		 b_{\psi(i)}   & \mbox{if } \delta(b_{i}) = 0; \\
		b_{\psi(d) + i - \psi(i)} & \mbox{if } \delta(b_{i}) =1.  \\
	\end{array}
\right.$
\end{center}
\begin{defn}
   Let $(B, \delta)$ be a final type, $C \subseteq B$ a subset and $ \epsilon = \delta \vert _{C}$. Then $(C, \epsilon)$ is a \textit{final subtype} of $(B, \delta)$ if $ \pi_{\delta}(C) = C$. 
\end{defn}
\begin{defn}
 A final type $(B, \delta)$ is called \textit{indecomposable if} it has no proper final subtype.    
\end{defn}
We write $D$ for the ordered subset of $[0,1]$ consisting of elements $u =  \sum_{l=1}^{\infty} a_{l}2^{-l} $ with $a_{l} \in \{0, 1 \} $ and with $a_{l+d} = a_{l}$ for all $l \ge 1$, for some $d$.  For an indecomposable final type $\mathcal{B} = (B, \delta)$, we define a map 
\begin{center}
$\nu_{\mathcal{B}} : B \rightarrow D
$, $b \mapsto \displaystyle \sum_{l=1}^{\infty } \delta( \pi_{\delta}^{- l}(b)) 2^{-l}$. 
\end{center}
The following two results allow us to determine $\varphi \oplus \varphi'$ for any two elementary sequences $\varphi$, $\varphi'$.

\begin{prop}{({\rm Harashita} \cite[Corollary 4.8 (i)]{harashita2009configuration})}
Let $\mathcal{C} = (C, \epsilon)$ be an  indecomposable final type and $e \ge 1$ an integer. Let $\mathcal{B}$ be the final type $\mathcal{C}^{\oplus e}$. Then $ \mathcal{B} = (B , \delta)$ is obtained as follows: 
\begin{center}
    $B = \{ c_{1}^{1} < \ldots < c_{1}^{e} < c_{2}^{1} < \ldots, < c_{2}^{e} < \ldots < c_{d}^{1} < \ldots < c_{d}^{e} \}$ and $\delta(c_{i}^{j}) = \epsilon^{j}(c_{i}^{j})$ for all $1 \le i \le d$,$ 1 \le j \le e$, 
    \end{center}
where we write  $ (C^{j}, \epsilon^{j}) = (C, \epsilon)$ and $C^{j} = \{ c_{1}^{j} < \ldots < c_{d}^{j} \}$ for $ 1 \le j \le e$.
    \end{prop}
\begin{eg} Fix an integer $e \ge 1$. If $\varphi = (0)$ , then $C = \{ c_1, c_2 \}$ and we find $\epsilon(c_1) =1, \epsilon(c_2) =0$ and hence $\delta(b_i) =1 $ and $\delta( b_{e+ i}) =0$ for all $ 1 \le i \le e$, and thus $\varphi^{\oplus e } = (0, \ldots, 0)$. If $\varphi = (1)$,  $\epsilon(c_1) = 0 , \epsilon(c_2) =1$ and hence 
$\delta(b_i) =0 $ and $\delta( b_{e+ i}) =1$ for $ 1 \le i \le e$, so $ \varphi^{\oplus e} = (1,2,\ldots, e )$.
\end{eg}
    
\begin{prop}{({\rm Harashita} \cite[Corollary 4.8 (ii)]{harashita2009configuration})}        
Let $\mathcal{C}_{j} = (C_{j}, \epsilon_{j})$ for $ 1 \le j \le d$ be powers of distinct indecomposable final types, with $ C_j = \{ c_{1,j} < \ldots < c_{d,j} \}$.
Then $\mathcal{B} = (B, \delta) = \bigoplus_{j=1}^{d} \mathcal{C}_{j}$ can be computed as follows: $B =  \union_{j} C_{j}$ with ordering 
    \begin{center}
          $ b < b'  \Leftrightarrow \left\{
	\begin{array}{ll}
		 b < b'   & \mbox{for} \ b,b' \in C_{j};\\
		\nu_{j} (b) < 
    \nu_{j'}(b') & \mbox{for} \ b \in C_{j}, b' \in C_{j'},  \\
	\end{array}
\right. $
\end{center}
where $\nu_j = \nu_{\mathcal{C}_{j}}$, and $\delta(c_{i,j}) = \epsilon_{j} (c_{i,j})$ for all $i$ and $j$.
\end{prop}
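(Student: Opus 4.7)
I would approach the proof by reducing to the case $d=2$ (and then inducting), and then carefully tracking how the canonical filtration behaves under direct sums of $BT_1$s.

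\textbf{Step 1: Reinterpret the final type as a filtration label.} Recall that the final type $(B,\delta)$ of a symmetric $BT_1$ $G$ encodes its canonical filtration $0 = G_0 \subset G_1 \subset \cdots \subset G_{2g} = G$: the elements of $B$ index the filtration steps, and $\delta(b_i)$ records whether the new generator at step $i$ comes from the image of $V$ or from a complementary $F^{-1}$-contribution. The map $\pi_\delta$ records precisely where $V$ sends the generator at step $i$. So an element of $B$ carries the data of a generator-jump in the filtration, and $\pi_\delta$ is the action of $V$ on these generators.

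\textbf{Step 2: Filtration of the direct sum.} For $G \oplus G'$, since $V$ and $F^{-1}$ act componentwise, every subgroup in the canonical filtration of $G \oplus G'$ has the form $G_i \oplus G'_j$ for some $(i,j)$. Constructing the canonical filtration of $G \oplus G'$ thus amounts to producing an increasing sequence of pairs $(i_k, j_k)$ that starts at $(0,0)$, ends at $(2g, 2g')$, and increments one coordinate by $1$ at each step. The map $\delta$ for the sum is inherited from the summands, which establishes the formula $\delta(c_i^j) = \epsilon^j(c_i^j)$; this is the easy half.

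\textbf{Step 3: The interleaving is governed by $\nu$.} The harder half is showing that the ordering merging $B = C^1 \cup \cdots \cup C^d$ is determined by the $\nu$-values. The intuition is that $\nu_{\mathcal{C}^j}(b)$ is the ``address'' of $b$ in the dyadic tree of iterated $\pi^{-1}$-orbits: applying $V$ repeatedly reveals $\delta$-values one digit at a time, and two generators are ordered in the canonical filtration exactly according to their binary expansions $\sum \delta(\pi^{-\ell} b) 2^{-\ell}$. Formally, I would prove by induction on filtration step that the inequality $\nu^j(b) < \nu^{j'}(b')$ is equivalent to $b$ appearing before $b'$ in the final filtration of $G \oplus G'$, using the recursion
\[
\nu(b) = \delta(\pi_\delta^{-1} b) \cdot 2^{-1} + 2^{-1} \cdot \nu(\pi_\delta^{-1} b),
\]
which reduces comparison of $\nu$-values to comparison of $\delta$-values one digit at a time, matching how $V$-orbits determine the filtration.

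\textbf{Step 4: Use the distinct-factors hypothesis.} The main obstacle is ruling out ties: if $\nu^j(b) = \nu^{j'}(b')$ for $b \in C^j, b' \in C^{j'}$ with $j \ne j'$, the merging rule would be ill-defined. I would show that equality of $\nu$-values forces the $\pi$-orbits of $b$ and $b'$ to produce identical $\delta$-sequences, which by indecomposability exhibits an isomorphism of $\mathcal{C}^j$ with $\mathcal{C}^{j'}$ as final types, contradicting the no-common-factors hypothesis. This is where the hypothesis is essential, and it is the genuinely delicate step of the argument. With the ordering then well-defined, combining Steps 2 and 3 yields both claims of the proposition.
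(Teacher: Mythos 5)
First, note that the paper does not prove this statement at all: it is quoted from Harashita \cite[Corollary 4.8(ii)]{harashita2009configuration} and used as a black box, so there is no internal proof to compare your attempt against. Judged on its own terms, your sketch follows the right general lines of Harashita's combinatorial argument (inherit $\delta$ componentwise, merge the underlying sets by comparing $\nu$-values, use the no-common-factor hypothesis to exclude ties), and Step 4 correctly identifies where that hypothesis enters --- although the conclusion should be an isomorphism of the indecomposable \emph{factors} of $\mathcal{C}^j$ and $\mathcal{C}^{j'}$ containing $b$ and $b'$, not of $\mathcal{C}^j$ and $\mathcal{C}^{j'}$ themselves, since the $\mathcal{C}^j$ are not assumed indecomposable.

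The genuine gap is in Step 3. The recursion $\nu(b) = \tfrac{1}{2}\delta(\pi_\delta^{-1}b) + \tfrac{1}{2}\nu(\pi_\delta^{-1}b)$ only tells you how to compare $\nu$-values digit by digit; it says nothing yet about the order in the final filtration. What is actually needed, and what you never state, is the structural property of $\pi_\delta$ that makes the digit-by-digit comparison compute that order: from the explicit formula $\pi_\delta(b_i) = b_{\psi(i)}$ when $\delta(b_i)=0$ and $\pi_\delta(b_i)=b_{\psi(d)+i-\psi(i)}$ when $\delta(b_i)=1$, one checks that $\pi_\delta$ is strictly increasing on each level set of $\delta$ and maps $\delta^{-1}(0)$ entirely below $\delta^{-1}(1)$. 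Only with this in hand does one obtain the equivalence ``$c < c'$ iff the sequence $(\delta(\pi_\delta^{-\ell}c))_{\ell}$ precedes $(\delta(\pi_\delta^{-\ell}c'))_{\ell}$ lexicographically,'' which is precisely the statement that $\nu$ is an order-embedding and is the engine of the whole proposition; without it your induction has nothing to run on. Relatedly, in Step 2 the identification of the canonical filtration of $G \oplus G'$ with subgroups $G_i \oplus G'_j$ is immediate for subgroups of the form $w\cdot(G\oplus G')$, since words in $V$ and $F^{-1}$ act componentwise, but the final type lives on the full flag refining the canonical filtration, and showing that this flag (and hence $\delta$ and the ordering) is captured by the data you describe is itself part of what must be argued rather than assumed.
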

\begin{eg}
   If $\mathcal{C}_1$ corresponds to $\varphi =(0)$ and $\mathcal{C}_2$ corresponds to $\varphi' = (1)$, then we find $\pi_{\mathcal{C}_1}$ acts as the permutation $(1,2)$, whilst $\pi_{\mathcal{C}_2}$ is the identity, and further find: $ \nu_1(c_{1,1}) = \frac{1}{3}$, $ \nu_1(c_{2,1}) = \frac{2}{3}$,  $ \nu_2(c_{1,2}) = 0$, $ \nu_2(c_{2,2}) = 1$ and so 
   \begin{center}
       $ c_{1,2} < c_{1,1} < c_{2,1} < c_{2,2}$
   \end{center}
   with $ \delta = (0,1,0,1)$ and so $ \varphi \oplus \varphi' = (1,1)$.
   \end{eg}
To apply Proposition \ref{inductiveprop}, we require multiple additions, and in particular we need to understand which elementary sequences correspond to indecomposable final types. For this we computed all $\varphi \oplus \varphi'$ of total length at most $5$, which we computed using \texttt{magma}, and the functions \texttt{addition} and \texttt{mult} from the \texttt{generalfunctions.m} file of the online repository. The results of our computations for sequences of $p$-rank zero are summarised in Table \ref{additions}, those of positive $p$-rank appear in Table \ref{positiveprank}.

\begin{table}
\centering
    \begin{tabular}{|c|c|}
    \hline
   $g$  & Elementary Sequences \\ \hline 
   $1$ & $ \color{blue}{(0), (1)}$  \\ \hline 
   $2$   & $(0,0) = (0)^{\oplus 2}, \color{blue}{(0,1)}$  \\ \hline 

   $3$   & $(0,0,0) = (0)^{\oplus 3}$, $(0,0,1) = (0) \oplus (0,1), \color{blue}{(0,1,1),(0,1,2)} $   \\ \hline     
         $4$    & $(0,0,0,0) = (0)^{\oplus 4}$, $(0,0,0,1) = (0)^{\oplus 2} \oplus (0,1) $,$(0,0,1,2) = (0,1)^{\oplus 2}$, $(0,1,1,1) = (0) \oplus (0,1,1),$  \\

         &$(0,1,1,2) = (0) \oplus (0,1,2), \color{blue}{(0,0,1,1), (0,1,2,2), (0,1,2,3)} $ \\ \hline  
    
           & $(0,0,0,0,0) = (0)^{\oplus 5}, (0,0,0,0,1) = (0)^{\oplus 3} \oplus (0,1), (0,0,0,1,1) = (0) \oplus (0,0,1,1), $    \\ 
    $5$       & $(0,0,0,1,2) = (0) \oplus (0,1)^{\oplus 2}, (0,0,1,2,2) = (0,1) \oplus (0,1,1), (0,0,1,2,3) = (0,1) \oplus (0,1,2),   $ \\
           & $(0,1,1,1,1) = (0)^{\oplus 2} \oplus (0,1,1),(0,1,1,1,2) = (0)^{\oplus 2} \oplus (0,1,2), (0,1,2,2,2) = (0) \oplus (0,1,2,2),$  \\
           & $ (0,1,2,2,3) = (0) \oplus (0,1,2,3), \color{blue}{(0,0,1,1,1), (0,0,1,1,2), (0,1,2,3,3), (0,1,1,2,2),}$ \\ 
           & $ \color{blue}{(0,1,2,3,4), (0,1,1,2,3)}$ \\ \hline 
  \end{tabular}
      \caption{ Decompositions of elementary sequences of dimension $ 1 \le g \le 5$, indecomposable sequences are coloured blue.}
    \label{additions}
    \end{table}

\subsection{Minimal $p$-divisible groups}

For any Newton polygon $ \xi = \displaystyle \sum_{i=1}^{t} [m_{i}, n_{i}]$ we define a $p$-divisible group 
\begin{center}
    $G(\xi) :=  \displaystyle \bigoplus_{i=1}^{t} G_{m_{i},n_{i}}$
\end{center}
with $G_{m,n}$ defined as in Section \ref{npstrat}. For consistency, we note that the Newton polygon of the $p$-divisible group $G ( \xi)$ is  $\xi$. 
\begin{defn}
  A $p$-divisible group $\mathcal{G}$ is called \textit{minimal} if it is isomorphic to $G(\xi)$ for some Newton polygon $\xi$. 
\end{defn}

\begin{thm}[{\rm Oort} \cite{oort2005minimal}]
If $\mathcal{G}_{1}$ and $\mathcal{G}_{2}$ are $p$-divisible groups, defined over an algebraically closed field, and $\mathcal{G}$ is a minimal $p$-divisible group satisfying $\mathcal{G}_{1}[p] \isom \mathcal{G} [p]\isom \mathcal{G}_{2}[p]$, then $\mathcal{G}_{1} \isom \mathcal{G}_{2}$.
\end{thm}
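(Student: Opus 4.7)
The plan is to work through contravariant Dieudonn\'e theory, which gives an equivalence of categories between $p$-divisible groups over the algebraically closed field $k$ of characteristic $p$ and triples $(M,F,V)$, where $M$ is a finite free $W(k)$-module, $F$ is $\sigma$-semilinear, $V$ is $\sigma^{-1}$-semilinear, and $FV = VF = p$. Under this equivalence, the $p$-torsion $\mathcal{G}[p]$ corresponds to $M/pM$ with its induced $F$ and $V$ (which is a $BT_1$ in the sense of Section 2). The statement then becomes: if $M_1, M_2$ are Dieudonn\'e modules with $M_1/pM_1 \isom M/pM \isom M_2/pM_2$ and $M$ is minimal, then $M_1 \isom M_2$. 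The minimal Dieudonn\'e modules have an especially rigid description: for coprime $m,n$ with $h = m+n$, the module $M_{m,n}$ attached to $G_{m,n}$ admits a $W(k)$-basis $e_0,\ldots,e_{h-1}$ on which $F$ and $V$ act by cyclic permutations (up to a factor of $p$) according to the slope $n/h$.

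The first reduction is to the isoclinic case. Since the minimal group $\mathcal{G}(\xi) = \bigoplus_i G_{m_i,n_i}$ decomposes as a direct sum of basic summands, its Dieudonn\'e module splits accordingly, and I would argue that the $p$-torsion already sees this decomposition: the isotypic components of $M/pM$ under the action of $F$ and $V$ pick out the slope filtration, and these components lift canonically to splittings of any Dieudonn\'e module $N$ with $N/pN \isom M/pM$. This reduces the claim to showing that if $M = M_{m,n}$ and $N$ is any Dieudonn\'e module with $N/pN \isom M/pM$, then $N \isom M$.

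The core of the proof is then a lifting argument. Starting from a chosen $BT_1$-isomorphism $\bar\phi : N/pN \isomto M/pM$, I would inductively build compatible isomorphisms $\phi_k : N/p^k N \isomto M/p^k M$ of truncated Dieudonn\'e modules. At each step, having produced $\phi_k$, the existence of an extension $\phi_{k+1}$ is measured by an obstruction class; the freedom to modify a candidate lift is measured by the group of automorphisms of $M/p^k M$ reducing to the identity modulo $p$. The heart of the argument is to show, using the explicit cyclic basis of $M_{m,n}$, that the action of these automorphisms on the space of candidate lifts is transitive, so the obstruction can always be killed. Passing to the inverse limit over $k$ produces an isomorphism $N \isom M$, and combining with the symmetric statement for $M_2$ yields $M_1 \isom M_2$.

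The main obstacle is the transitivity/obstruction-vanishing step in the lifting argument: this is exactly where the minimality hypothesis is essential, since in general a truncated Barsotti--Tate group does not determine its $p$-divisible lifts up to isomorphism (this fails already for $BT_1$ versus $BT_2$ for non-minimal isogeny types). One must use very concretely the shape of the cyclic basis of $M_{m,n}$ to verify that the automorphism group of $M_{m,n}[p]$ is large enough (and acts with rich enough orbits on lifts) to absorb any discrepancy at each step. Oort's argument handles this via the display formalism, exhibiting an explicit normal form for $N$ to which any candidate lift can be conjugated; an alternative would be to interpret the obstructions as classes in a suitable Ext group computable from the rigid structure of $M_{m,n}$ and verify their vanishing directly.
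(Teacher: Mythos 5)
The paper does not prove this statement: it is quoted as a black box from Oort's \emph{Minimal $p$-divisible groups} and used only through Corollary \ref{minimalitycorollary}. So there is no internal proof to compare against; what follows measures your sketch against what an actual proof requires.

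There are two genuine gaps, and together they contain essentially all of the content of the theorem. First, the reduction to the isoclinic case. You assert that the isotypic pieces of $M/pM$ ``lift canonically to splittings of any Dieudonn\'e module $N$ with $N/pN \isom M/pM$.'' Over an algebraically closed field a $p$-divisible group is in general only \emph{isogenous}, not isomorphic, to a direct sum of isoclinic ones; the slope filtration of $\mathcal{G}_1$ need not split a priori, and the fact that it does split when $\mathcal{G}_1[p]$ is minimal is part of the conclusion of Oort's theorem, established by a separate (and nontrivial) argument after the isoclinic case is done. Asserting the canonical lift of the splitting is therefore very close to assuming what is to be proved. Second, in the isoclinic case you correctly locate the crux --- that automorphisms of $M/p^kM$ reducing to the identity act transitively enough on candidate lifts to kill the obstruction --- but you then explicitly defer it (``one must use very concretely the shape of the cyclic basis\dots''). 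Since you yourself note that a $BT_1$ does not determine its $p$-divisible lifts for non-minimal isogeny types, this unproved transitivity statement is exactly where the theorem lives; without it the sketch proves nothing.

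A smaller factual point: Oort's argument is not a display-theoretic obstruction calculus of the kind you describe. He works directly with the Dieudonn\'e module of the unknown group, using the cyclic $W$-basis of $M_{m,n}$ (with $\F e_i = e_{i+n}$, $e_{i+h}=pe_i$) and the fact that $\End(H_{m,n})$ is the maximal order in the division algebra of invariant $n/(m+n)$, to build by successive approximation a basis of $N$ in the same normal form. Your deformation-theoretic reformulation is a plausible alternative route, but as written it is a plan for a proof rather than a proof.
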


Let $\xi$ be a Newton polygon. As shown in \cite[Proposition 3.7]{oort2004foliations}, there is a unique, up to isomorphism, principal quasi-polarisation on the $p$-divisible group $G(\xi)$, and thus to $G(\xi)[p]$ we associate a well-defined elementary sequence $\varphi_{\xi}$. 
\begin{cor} \label{minimalitycorollary}
 Let $\xi$ be a Newton polygon, $\varphi_{\xi}$ the associated elementary sequence and $\mathcal{N} (\xi)$ the Newton polygon stratum defined by $\xi$. Then $S_{\varphi_{\xi}} \subset \mathcal{N}( \xi) $.
\end{cor}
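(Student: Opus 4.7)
The plan is to use Oort's minimality theorem verbatim, taking the target minimal $p$-divisible group to be $G(\xi)$ itself. The statement is essentially a bookkeeping exercise linking three facts already in hand: the bijection between symmetric $BT_1$s (up to isomorphism over $k$) and elementary sequences; the construction of $\varphi_\xi$ from $G(\xi)[p]$; and minimality of $G(\xi)$ as a $p$-divisible group.

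Concretely, I would fix $(X,\lambda) \in S_{\varphi_\xi}$ and argue as follows. By definition of $S_{\varphi_\xi}$ the symmetric $BT_1$ scheme $X[p]$ has elementary sequence $\varphi_\xi$, and by the very definition of $\varphi_\xi$ recalled in the paragraph preceding the corollary, the symmetric $BT_1$ scheme $G(\xi)[p]$ also has elementary sequence $\varphi_\xi$. Since $k$ is algebraically closed, the map $\mathrm{ES}$ from symmetric $BT_1$s over $k$ (up to $k$-isomorphism) to elementary sequences of length $g$ is a bijection (stated in Section~\ref{eostrat}), and therefore $X[p] \isom G(\xi)[p]$ as $BT_1$s over $k$.

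Now apply Oort's minimality theorem (quoted just above the corollary) with $\mathcal{G}_1 = X[p^\infty]$, $\mathcal{G}_2 = G(\xi)$, and $\mathcal{G} = G(\xi)$, the latter being minimal by construction. The hypothesis $\mathcal{G}_1[p] \isom \mathcal{G}[p] \isom \mathcal{G}_2[p]$ is exactly the isomorphism obtained in the previous paragraph (using $X[p^\infty][p] = X[p]$ together with the tautology $G(\xi)[p] \isom G(\xi)[p]$). The theorem then yields $X[p^\infty] \isom G(\xi)$ as $p$-divisible groups over $k$.

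Since isomorphic $p$-divisible groups have the same Newton polygon, and since $G(\xi)$ is isoclinic-by-construction with $\mathrm{NP}(G(\xi)) = \xi$, we conclude $\mathrm{NP}(X) = \mathrm{NP}(X[p^\infty]) = \xi$, i.e.\ $(X,\lambda) \in \mathcal{N}(\xi)$. As $(X,\lambda) \in S_{\varphi_\xi}$ was arbitrary, this gives $S_{\varphi_\xi} \subset \mathcal{N}(\xi)$. There is no real obstacle here: the only subtlety is being careful that minimality upgrades an isomorphism of $p$-torsion to an isomorphism of the whole $p$-divisible group, which is precisely the content of Oort's theorem applied to the minimal witness $G(\xi)$.
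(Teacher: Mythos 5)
Your proof is correct and follows exactly the paper's argument: identify $X[p]\isom G(\xi)[p]$ via the bijection $\mathrm{ES}$, then invoke Oort's minimality theorem with $\mathcal{G}=G(\xi)$ to upgrade this to $X[p^\infty]\isom G(\xi)$, whence $\mathrm{NP}(X)=\xi$. The only (inconsequential) slip is calling $G(\xi)$ ``isoclinic-by-construction''---it is a direct sum of isoclinic pieces and generally not isoclinic itself; the relevant fact is simply that $\mathrm{NP}(G(\xi))=\xi$, as noted in the paper.
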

\begin{proof}
  If $ (X, \lambda) \in S_{\varphi_{\xi}}$, then $X[p] \isom G(\xi)[p]$, and since $G(\xi)$ is minimal, it follows that $X[p^{\infty}] \isom G(\xi)$ as $p$-divisible groups.  
\end{proof}
For a Newton polygon $\xi$, the associated elementary sequence $\varphi_{\xi}$ can be explicitly computed using \cite[Section 4.5]{harashita2009configuration}. We give a short overview of this here. 
For any coprime integers $m \ge n \ge0$, set
\begin{center}
$ \psi_{m,n} = (0, \ldots, 0, 1, \ldots , m )$ 
\end{center}
where the first $n$ entries are all $0$, and let $ \mathcal{B}_{m,n} = (B_{m,n}, \delta_{m,n})$ be its associated final type. This is an indecomposable final type by \cite[Corollary 4.15]{harashita2009configuration}.

\begin{table}[ht]
    \centering
    \begin{tabular}{|c|c|c|}
    \hline
    $\xi$ &  $\varphi_{\xi}$ & $p$-rank  \\
    \hline 
$2[1,0] + [2,1] + [1,2]  +2[0,1]$ & $(1,2,2,3,3) $ & $2$\\
$ 2[1,0] + 3[1,1] + 2[0,1]$ & $(1,2,2,2,2) $ & $2$ \\
$ [1,0] + [3,1] + [1,3] + [0,1]$ & $(1,1,2,3,3)$ & $1$ \\
$[1,0] + [2,1] + [1,1] + [1,2] + [0,1]$ & $(1,1,2,2,2) $ & $1$ \\
$[1,0] + 4[1,1] + [0,1] $ & $(1,1,1,1,1) $ & $1$ \\
   $[4,1] + [1,4]$ & $(0,1,2,3,3)$ & $0$ \\
   $[3,1] + [1,1] + [1,3]$ & $(0,1,2,2,2)$ & $0$ \\
   $[2,1] + 2[1,1] + [1,2]$ & $(0,1,1,1,1)$ & $0 $ \\
   $[3,2] + [2,3] $ & $(0,0,1,1,1)$ & $0$ \\
    $5[1,1] $ & $ (0,0,0,0,0)$ & $0$\\
  \hline 
   \end{tabular}
      \caption{Minimal elementary sequences in dimension $5$.}
    \label{minimaleltseq}
    \end{table}

\begin{prop}{({\rm Harashita} \cite[Corollary 4.22]{harashita2009configuration})} \label{finalseqcoms} Let $\eta = \sum_{i=1}^{t}[m_i, n_i]$ be a Newton polygon, with $n_i/(m_i + n_i) \le 1/2$ for all $i$, and let $\mathcal{B} = \bigoplus_{i=1}^{t} \mathcal{B}_{m_i, n_i}$ be the associated final type, and $\psi_{\eta}$ the associated final sequence.  Let $\xi = s[1,1] + \sum_{i=1}^{t} [m_i, n_i] + [n_i, m_i]$ be a symmetric Newton polygon and set $m = \sum_{i=1}^t m_i$, $n = \sum_{i=1}^t n_i$. Then $\varphi_{\xi}$ is given by 
\begin{center}
    $\varphi_{\xi} = (\psi_{\eta}(1), \ldots, \psi_{\eta}(m), m-n, \ldots, m-n)$
\end{center}
where the last $n+s$ entries are all $m-n$.
\end{prop}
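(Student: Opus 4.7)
The plan is to express $G(\xi)[p]$ as a direct sum of indecomposable symmetric $BT_1$-factors, apply the two preceding corollaries of Harashita to explicitly compute the final type of the sum, and then read off $\varphi_\xi$ from the first half of the resulting final sequence.

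First, since $G(\xi) \isom G_{(1,1)}^{\oplus s} \oplus \bigoplus_{i=1}^t (G_{(m_i,n_i)} \oplus G_{(n_i,m_i)})$ as $p$-divisible groups, passing to $p$-torsion gives a decomposition as symmetric $BT_1$'s, and hence the associated final type $\mathcal{B}_\xi$ decomposes as
\[
\mathcal{B}_\xi \isom \mathcal{B}_{1,1}^{\oplus s} \oplus \bigoplus_{i=1}^t \bigl(\mathcal{B}_{m_i,n_i} \oplus \mathcal{B}_{n_i,m_i}\bigr).
\]
By \cite[Corollary~4.15]{harashita2009configuration} (cited above), each $\mathcal{B}_{m_i,n_i}$ is indecomposable, so this is the decomposition into indecomposables, with $\mathcal{B}_{1,1}$ appearing with multiplicity $s$ and each pair $\mathcal{B}_{m_i,n_i}$, $\mathcal{B}_{n_i,m_i}$ appearing as distinct summands when $m_i \ne n_i$.

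Next I would compute the $\nu$-invariants attached to the indecomposable factors. A direct inspection of $\psi_{m,n}=(0,\ldots,0,1,\ldots,m)$ gives the map $\pi_{\delta_{m,n}}$ explicitly: it cycles the first $n$ elements of $B_{m,n}$ via the "V-part" and the last $m$ elements via the "F$^{-1}$-part," in a way governed by the slope $n/(m+n)$. Unwinding the definition $\nu_{\mathcal{B}_{m,n}}(b)=\sum_{l\ge 1}\delta(\pi_\delta^{-l}(b))2^{-l}$ shows that the $\nu$-values of $B_{m,n}$ sit in the subinterval of $[0,1]$ with average $n/(m+n)$, that the $\nu$-values of $B_{n,m}$ lie in the complementary subinterval symmetric across $1/2$, and that every element of $B_{1,1}$ has $\nu$-value exactly $1/2$. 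Applied via Corollary~4.8(i) (for the $\mathcal{B}_{1,1}^{\oplus s}$ block) and Corollary~4.8(ii) (for the distinct indecomposable factors), these orderings force the element set $B_\xi$ to split into three ordered blocks in the sum: the block coming from the $B_{m_i,n_i}$'s (with $\nu < 1/2$), then the block of all $B_{1,1}$ copies (with $\nu = 1/2$), then the block coming from the $B_{n_i,m_i}$'s (with $\nu > 1/2$), and that within the first block the internal ordering is exactly that of $\mathcal{B}_\eta = \bigoplus_i \mathcal{B}_{m_i,n_i}$.

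Finally, the value of $\delta$ on each element is preserved by the addition (part of both corollaries), so the final sequence $\psi_\xi$ of $\mathcal{B}_\xi$ agrees with $\psi_\eta$ on its first $m$ entries. On the middle block of $s$ supersingular elements each $\delta$-value is $0$, so $\psi_\xi$ stays constant, at the value $\psi_\eta(m)$; a short computation of $\psi_\eta(m)$ from $\psi_{m,n}(m+n)=m$ and the definition of direct sum of final sequences shows this constant equals $m-n$. Extracting $\varphi_\xi(i)=\psi_\xi(i)$ for $1 \le i \le g=m+n+s$ and then observing that the dual relation $\psi_\xi(2g-i)=g-i+\varphi_\xi(i)$ makes the last $n$ entries of $\varphi_\xi$ redundant (and equal to $m-n$ as well) gives the claimed formula. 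The main obstacle in the argument is the bookkeeping in the second step: checking that the $\nu$-values of the different $\mathcal{B}_{m_i,n_i}$'s interleave in precisely the same way as the slopes $n_i/(m_i+n_i)$, which is exactly what ensures that the resulting ordering on the first $m$ elements reproduces $\psi_\eta$ rather than some other final sequence of length $m+n$.
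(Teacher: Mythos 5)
The paper gives no proof of this proposition --- it is imported verbatim from Harashita --- so your sketch is an attempt to reconstruct Harashita's argument from the two corollaries stated just before it. Your overall strategy (decompose $G(\xi)[p]$ into the indecomposable factors $\mathcal{B}_{1,1}^{\oplus s}$ and $\mathcal{B}_{m_i,n_i}\oplus\mathcal{B}_{n_i,m_i}$, merge them via Corollary 4.8(i) and (ii), and read $\varphi_\xi$ off the first $g$ values of the resulting final sequence) is the right one. However, the central step --- your description of the $\nu$-invariants --- is false, and the error is fatal. The two elements of $B_{1,1}$ have $\nu$-values $1/3$ and $2/3$, not $1/2$: this is exactly the paper's own worked example following Corollary 4.8(ii). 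Likewise the $\nu$-values of $B_{m,n}$ with slope $n/(m+n)<1/2$ do not all lie below $1/2$: for $B_{2,1}$ one computes $\delta=(1,0,0)$, the cycle $b_1\mapsto b_3\mapsto b_2\mapsto b_1$, and $\nu$-values $1/7$, $2/7$, $4/7$. Consequently the three clean blocks you posit do not exist; the elements of the various factors genuinely interleave.

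You can see the damage concretely on $\xi=[2,1]+[1,1]+[1,2]$ (so $\eta=[2,1]$, $m=2$, $n=1$, $s=1$, $g=4$). The correct merged order of $\nu$-values is $1/7<2/7<1/3<3/7<4/7<2/3<5/7<6/7$, which gives $\delta=(1,0,1,1,0,0,1,0)$, hence $\psi_\xi=(0,1,1,1,2,3,3,4)$ and $\varphi_\xi=(0,1,1,1)=(\psi_\eta(1),\psi_\eta(2),m-n,m-n)$, as the proposition asserts; note that the fourth element comes from $B_{1,2}$ and the fifth from $B_{2,1}$. Your block ordering would instead give $\delta=(1,0,0,1,0,\ldots)$ and $\varphi_\xi=(0,1,2,2)$, which is wrong. (Relatedly, your treatment of the middle block is inconsistent: each copy of $B_{1,1}$ contributes one element with $\delta=1$ and one with $\delta=0$, so a block on which ``each $\delta$-value is $0$'' cannot contain all of them, and it has $2s$ elements, not $s$.) The statement is true, but establishing it requires the finer analysis of exactly how the $\nu$-values of the $B_{m_i,n_i}$, their duals, and the supersingular copies interleave; that analysis is the actual content of Harashita's Corollary 4.22, and it is the step your sketch replaces with an incorrect shortcut.
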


\begin{eg}
 If $\xi = 2[1,0] + [2,1] + [1,2] + 2[1,0]$, then $\eta = 2[1,0] + [2,1]$, $m=4, n=1$ and $s=0$. Since $ \psi_{1,0} = (1)$ and $ \psi_{2,1} = (0,1,2)$, $\psi_{\eta}$  corresponds to $(1)^{\oplus 2} \oplus (0,1,2) = (1,2,2,3,4)$. Thus, by Proposition \ref{finalseqcoms}, it follows that $\varphi_{\xi} = (1,2,2,3,4)$.
 \end{eg}

 We computed all elementary sequences $\varphi_{\xi}$ for Newton polygons of dimension $5$ and $p$-rank $ 0 \le f \le 2$, and our calculations are summarised in Table \ref{minimaleltseq}.

\section{Positive $p$-rank intersections} \label{positivesections}
In this section we study intersections of positive $p$-rank.
\subsection{Decomposing Positive $p$-rank Intersections}
When there is a single Newton polygon of a fixed $p$-rank $f$, the $p$-rank $f$ stratum is itself a Newton stratum, and thus  all Ekedahl-Oort with $p$-rank $f$ automatically intersect this Newton stratum. This is the case in dimensions $1 \le g \le 3$ and positive $p$-rank, for dimension $4$ and $p$-rank $2 \le f \le 4$, and for dimension $5$ and $p$-rank $3 \le f \le 5$. All other intersections reduce to understanding intersections in lower dimensions of $p$-rank zero as we now explain. We begin with a preliminary result, which we believe to be well-known, but we include a short proof for the reader's convenience.

\begin{lem} \label{pdivgp}
Let $(Y,\mu)$ be a principally quasi-polarised $p$-divisible group of length $2g$ and dimension $g$. Then there exists a principally polarised abelian variety $(X,\lambda) \in \mathcal{A}_g$ and an isomorphism $X[p^\infty] \simeq Y$, sending $\lambda$ to $\mu$.
\end{lem}
\begin{proof}
Since $Y$ is principally quasi-polarised, its Newton polygon $\xi$ is symmetric, starting at $(0,0)$ and ending at $2g$. Then by \cite[page 98]{MR3077121}, there exists a principally polarised abelian variety $(X',\lambda') \in \mathcal{A}_g$ such that there is an isogeny $\phi: X'[p^\infty] \to Y$ which sends $\lambda_{X'[p^\infty]}$ to $\mu$. We let $X=X'/\ker \phi$ and we let $\lambda$ be the induced polarisation. Then this construction yields an isomorphism $X[p^\infty] \cong Y$ that is compatible with polarisations.
\end{proof}

\begin{prop} \label{positiveintersectionsprop}
    Let $\xi = f([1,0] + [0,1]) + \xi_0$ be a symmetric Newton polygon of dimension $g$ and positive $p$-rank $f$, and $\xi_0$ a symmetric Newton polygon of dimension $g_0 < g$ of $p$-rank $0$. Let $\varphi = (1)^{\oplus f} \oplus\varphi_0$ be an elementary sequence of length $g$ and $p$-rank $f$, and $\varphi_0$ an elementary sequence of length $g_0$ and $p$-rank $0$. Then 
    $$ S_{\varphi} \cap \mathcal{N}(\xi) \ne \emptyset \iff S_{\varphi_0} \cap \mathcal{N}(\xi_0) \ne \emptyset. $$
\end{prop}
\begin{proof} 
Suppose $(X,\lambda)\in S_\varphi \cap \mathcal{N}(\xi)$. Then the $p$-divisible group of $X$ decomposes as follows:
\[
X[p^\infty] \cong \left(E_k/(F-1,V) \oplus E_k/(F,V-1)\right)^f \oplus Y_0.
\]
Restricting the quasi-polarisation on $X[p^\infty]$ induced from $\lambda$ to $Y_0$ gives a principal quasi-polarisation $\lambda_0$ on $Y_0$. Then, by Lemma \ref{pdivgp} there exists $(X_0,\mu_0) \in \mathcal{A}_{g_0}$ such that $X_0[p^\infty] \cong Y_0$ with polarisation. It follows that $(X_0,\mu_0) \in S_{\varphi_0} \cap \mathcal{N}(\xi_0)$.

Conversely, $(1)^{\oplus f} = (1, \ldots, f)$ and $S_{(1, \ldots, f)} = \mathcal{N}(f([1,0] + [0,1]))$, so $ S_{\varphi} \cap \mathcal{N}(\xi) \ne \emptyset$ by Proposition \ref{inductiveprop}.    
\end{proof}

\subsection{Intersections in dimension at most $4$ and $p$-rank zero}
\label{lowerdims} In this section we review intersections for dimensions $1 \le g \le 4$ and $p$-rank zero. In dimensions $1$ and $2$ we have
\begin{center}
    $\mathcal{S}_{1} = S_{(0)}$ and $ \mathcal{S}_{2} = S_{(0,0)} \cup S_{(0,1)}$.
\end{center}
There are two Newton polygons of $p$-rank $0$ in dimension $3$, and understanding their intersections with the Ekedahl-Oort strata is a straightforward exercise.
\begin{prop}{(\cite[Example 3.5]{karemaker2024geometry})} \label{dimension3intersections}
The intersections of the Newton polygon and the Ekedahl-Oort strata in dimension $3$ and $p$-rank $0$ are completely described  by 
\begin{itemize}
    \item  $\mathcal{S}_{3} = S_{(000)} \cup S_{(001)} \cup ( S_{(012)} \cap \mathcal{S}_{3})$;
    \item $ \mathcal{N}([2,1] + [1,2] ) = S_{(011)} \cup ( S_{(012)} \cap \mathcal{N}([2,1] + [1,2])))$;
    \end{itemize}
and all of the above intersections are non-empty.
\end{prop}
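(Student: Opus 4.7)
The plan is to enumerate all $p$-rank zero elementary sequences in length $3$ and assign each to one of the two Newton polygon strata of $p$-rank zero, namely $\mathcal{S}_3$ and $\mathcal{N}([2,1]+[1,2])$. By Definition \ref{esdefn} together with Proposition \ref{EOfacts}(3), the elementary sequences of length $3$ and $p$-rank $0$ are precisely $(0,0,0)$, $(0,0,1)$, $(0,1,1)$, and $(0,1,2)$. Since $\mathcal{S}_3$ and $\mathcal{N}([2,1]+[1,2])$ are the only two Newton polygon strata of $p$-rank $0$ in dimension $3$, each $S_{\varphi}$ in this list must meet at least one of them, and the task reduces to deciding which.

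First I would apply Proposition \ref{contaimentinsslocus} of Chai--Oort: for $g = 3$ we have $\lfloor (g+1)/2 \rfloor = 2$, so $S_{\varphi} \subseteq \mathcal{S}_3$ exactly when $\varphi(2) = 0$. This immediately places $S_{(0,0,0)}$ and $S_{(0,0,1)}$ entirely inside $\mathcal{S}_3$ and rules out full containment for $S_{(0,1,1)}$ and $S_{(0,1,2)}$. Next I would use Corollary \ref{minimalitycorollary} (minimality of $p$-divisible groups) to locate $S_{(0,1,1)}$. A direct application of Harashita's formula recalled just after Corollary \ref{minimalitycorollary}, with $\xi = [2,1] + [1,2]$ so that $m = 2$, $n = 1$, $s = 0$ and $\psi_{2,1} = (0,1,2)$, yields $\varphi_{\xi} = (0,1,1)$. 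Hence $S_{(0,1,1)} \subseteq \mathcal{N}([2,1]+[1,2])$, and combined with the Chai--Oort step this accounts for the three strata appearing without intersection in the statement.

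The only remaining sequence is $(0,1,2)$. By Proposition \ref{EOfacts}(4) its $a$-number is $g - \varphi(g) = 3 - 2 = 1$, so Proposition \ref{anumber1intersections} applies and gives that $S_{(0,1,2)}$ meets every $p$-rank $0$ Newton stratum densely, in particular both $\mathcal{S}_3$ and $\mathcal{N}([2,1]+[1,2])$ non-trivially. This simultaneously produces the two intersection pieces in the statement and establishes their non-emptiness. Non-emptiness of $S_{(0,0,0)}$, $S_{(0,0,1)}$, and $S_{(0,1,1)}$ as Ekedahl--Oort strata themselves is Proposition \ref{EOfacts}(1), so all five pieces in the displayed decompositions are non-empty.

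There is no real obstacle here: because there are only four candidate strata and two Newton polygons, the Chai--Oort containment result, the minimality formula for $\varphi_{\xi}$, and the $a$-number one density result together partition the $p$-rank zero locus into the claimed pieces with no leftover cases. The only computational step requiring care is the evaluation of $\varphi_{[2,1]+[1,2]} = (0,1,1)$, which is a direct unwinding of Harashita's construction and serves as a useful warm-up for the higher-dimensional computations carried out in Table \ref{minimaleltseq}.
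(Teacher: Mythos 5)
Your proof is correct, and every step checks out: the four $p$-rank-zero elementary sequences in length $3$ are as you list, Chai--Oort with $\lfloor (g+1)/2\rfloor = 2$ places $(0,0,0)$ and $(0,0,1)$ inside $\mathcal{S}_3$ and excludes the other two from full containment there, the minimality computation $\varphi_{[2,1]+[1,2]} = (0,1,1)$ is a correct unwinding of Harashita's formula (with $m=2$, $n=1$, $s=0$, $\psi_{2,1}=(0,1,2)$), and the $a$-number-one density result handles $(0,1,2)$ and supplies non-emptiness of both intersection pieces. The one point of comparison worth noting is that the paper does not prove this proposition at all: it is imported wholesale as a citation to Example 3.5 of Karemaker's notes, so there is no internal argument to match yours against. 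What you have written is a self-contained derivation using exactly the toolkit the paper assembles for its dimension-$4$ and dimension-$5$ arguments (Proposition \ref{contaimentinsslocus}, Corollary \ref{minimalitycorollary}, Proposition \ref{anumber1intersections}), which makes your proof a faithful low-dimensional instance of the paper's own methodology rather than a genuinely different route; it buys you independence from the external reference at essentially no extra cost, and the computation of $\varphi_{[2,1]+[1,2]}$ is indeed the same calculation recorded for dimension $5$ in Table \ref{minimaleltseq}.
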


In dimension $4$, intersections in $p$-rank $0$ are completely determined by Ibukiyama, Karemaker, and Yu.  

\begin{prop}{({ \rm Ibukiyama-Karemaker-Yu} \cite[Proposition 5.13]{ibukiyama2022polarised}) } \label{dim4intersections}
 The intersections of the Newton polygon and the Ekedahl-Oort stratifications in dimension $4$ and $p$-rank $0$ are completely described by 
 \begin{itemize}
     \item  $\mathcal{S}_{4} = S_{(0000)} \cup S_{(0001)} \cup S_{(0011)} \cup S_{(0012)} \cup (S_{(0112)} \cap \mathcal{S}_{4}) \cup ( S_{(0123)} \cap \mathcal{S}_{4});$
     \item $ \mathcal{N}([2,1] + [1,1] + [1,2])  = S_{(0,1,1,1)} \cup (S_{(0,1,1,2)} \cap \mathcal{N}([2,1] + [1,1] + [1,2])) \cup (S_{(0,1,2,3)} \cap \mathcal{N}([2,1] + [1,1] + [1,2])) ;$
     \item $\mathcal{N} ([3,1] + [1,3] ) =S_{(0,1,2,2)} \cup  ( S_{(0,1,2,3)} \cap \mathcal{N}([3,1] + [1,3])).$
     \end{itemize}
All intersections appearing above are non-empty. 
\end{prop}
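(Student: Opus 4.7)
The plan is to treat each of the eight $p$-rank zero Ekedahl-Oort strata in dimension $4$ in turn, using Chai-Oort containment, minimality of $p$-divisible groups, Harashita's first-slope criterion, and the inductive Proposition \ref{inductiveprop} together with the already-settled dimension $3$ case in Proposition \ref{dimension3intersections}. These eight strata are precisely the elementary sequences $\varphi$ of length $4$ with $\varphi(1) = 0$, namely
\[
(0,0,0,0),\ (0,0,0,1),\ (0,0,1,1),\ (0,0,1,2),\ (0,1,1,1),\ (0,1,1,2),\ (0,1,2,2),\ (0,1,2,3).
\]

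First I would apply Proposition \ref{contaimentinsslocus} with $g = 4$: the condition $\varphi(2) = 0$ is satisfied exactly by the first four sequences, placing them fully inside $\mathcal{S}_4$. Next, a direct first-slope computation (analogous to Table \ref{minimaleltseq}) identifies $(0,1,1,1)$ and $(0,1,2,2)$ as the minimal elementary sequences $\varphi_{[2,1]+[1,1]+[1,2]}$ and $\varphi_{[3,1]+[1,3]}$ respectively, so Corollary \ref{minimalitycorollary} gives $S_{(0,1,1,1)} \subset \mathcal{N}([2,1]+[1,1]+[1,2])$ and $S_{(0,1,2,2)} \subset \mathcal{N}([3,1]+[1,3])$. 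Since Newton polygon strata are disjoint, this also excludes these strata from the other two Newton polygon pieces.

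Only $S_{(0,1,1,2)}$ and $S_{(0,1,2,3)}$ remain. The latter has $a$-number $4 - 3 = 1$, so Proposition \ref{anumber1intersections} yields a dense (in particular non-empty) intersection with each of the three $p$-rank zero Newton polygon strata, accounting for every $S_{(0,1,2,3)} \cap \mathcal{N}(\cdot)$ term in the statement. For $S_{(0,1,1,2)}$ I would compute the first Newton slope $\lambda_{(0,1,1,2)} = 1/3$ using the algorithm recalled in Section \ref{intsec}; Proposition \ref{firstslope} then forces $S_{(0,1,1,2)} \subset Z_{1/3}$, which rules out intersection with $\mathcal{N}([3,1]+[1,3])$ whose first slope is $1/4$. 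To produce points in the two remaining intersections, I would use the decomposition $(0,1,1,2) = (0) \oplus (0,1,2)$ from Table \ref{additions} together with Proposition \ref{inductiveprop}: the product of a supersingular elliptic curve (in $S_{(0)} \cap \mathcal{N}([1,1])$) with a point of $S_{(0,1,2)} \cap \mathcal{S}_3$ (respectively $S_{(0,1,2)} \cap \mathcal{N}([2,1]+[1,2])$), both non-empty by Proposition \ref{dimension3intersections}, lands in $S_{(0,1,1,2)} \cap \mathcal{S}_4$ (respectively $S_{(0,1,1,2)} \cap \mathcal{N}([2,1]+[1,1]+[1,2])$).

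The main obstacle, in my view, is the verification that $\lambda_{(0,1,1,2)} = 1/3$ and that no intersection claimed above is secretly empty for closure-theoretic reasons. Both are routine but delicate: the first-slope value requires iterating the stretched final map, and the closure check requires the analogue of Proposition \ref{codim1} in dimension $4$ to confirm that the inductively constructed points genuinely land in the asserted Ekedahl-Oort stratum (rather than a specialisation thereof). With those verifications in hand, the full statement follows from the case analysis above.
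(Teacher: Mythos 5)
Your proposal is correct, but it is worth noting that the paper itself does not prove this proposition at all: it is quoted verbatim from Ibukiyama--Karemaker--Yu \cite[Proposition 5.13]{ibukiyama2022polarised} and used as an input. What you have written is an independent derivation that runs the paper's own dimension-$5$ strategy one dimension lower, and every step checks out: the four sequences with $\varphi(2)=0$ are exactly those absorbed by Proposition \ref{contaimentinsslocus}; Harashita's formula gives $\varphi_{[2,1]+[1,1]+[1,2]}=(0,1,1,1)$ and $\varphi_{[3,1]+[1,3]}=(0,1,2,2)$, so Corollary \ref{minimalitycorollary} plus disjointness of Newton strata settles those two; Proposition \ref{anumber1intersections} handles $(0,1,2,3)$; the computation $\lambda_{(0,1,1,2)}=1/3$ is right (one finds $D=\{1,2,5\}$, $C=\{5\}$) and correctly kills $S_{(0,1,1,2)}\cap\mathcal{N}([3,1]+[1,3])$; and the decomposition $(0,1,1,2)=(0)\oplus(0,1,2)$ with Propositions \ref{inductiveprop} and \ref{dimension3intersections} populates the two remaining intersections. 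One remark: your closing worry about a ``closure check'' is unnecessary. Proposition \ref{inductiveprop} rests on the exact containment $S_{\varphi}\times S_{\varphi'}\subseteq S_{\varphi\oplus\varphi'}$ (the Ekedahl--Oort type of a product is the honest direct sum of $BT_1$s, not a specialisation of it), so the constructed points land in the asserted stratum on the nose and no analogue of Proposition \ref{codim1} is needed. Also, the identification of $(0,1,1,1)$ and $(0,1,2,2)$ as minimal sequences is a computation with Harashita's final types, not a ``first-slope computation'' as you call it, though that is only a slip of terminology. The value of your argument is that it makes the dimension-$4$, $p$-rank-$0$ case self-contained within the paper's toolkit rather than an external citation; the cost is that it gives no information the citation does not already provide.
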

\subsection{Positive $p$-rank intersections}
In dimension $5$, the only non-immediate intersections to consider are those of $p$-ranks one and two. It follows from Proposition \ref{positiveintersectionsprop} that this reduces to computing the appropriate sums of elementary sequence and apply the results of Propositions \ref{dimension3intersections} and \ref{dim4intersections}. We summarise our computations in Table \ref{positiveprank}.

\begin{table}
    \begin{tabular}{|c|c|c|c|}
    \hline 
    $g$ & $ f$ & $\xi$ & $ \varphi \ \text{such that} \ S_{\varphi} \cap \mathcal{N}(\xi) \ne \emptyset$  \\
    \hline 
    $5$ & $2$ & $2[1,0] + [2,1] + [1,2] + 2[0,1]$ & $(1,2,2,3,3) = (1)^{\oplus 2} \oplus (0,1,1) , (1,2,2,3,4) = (1)^{\oplus 2} \oplus (0,1,2) $ \\
    \hline 
    $5$ & $2$ & $2[1,0] + 3[1,1] + 2[0,1]$ & $(1,2,2,2,2) = (1)^{\oplus 2} \oplus (0,0,0), (1,2,2,2,3) = (1)^{\oplus 2} \oplus (0,0,1), $ \\ & & & $ (1,2,2,3,4) = (1)^{\oplus 2} \oplus (0,1,2)$ \\ 
    \hline 
    $5$ & $1$ & $[1,0] + [3,1] + [1,3] + [0,1]$ & $(1,1,2,3,3) = (1) \oplus (0,1,2,2), (1,1,2,3,4) = (1) \oplus (0,1,2,3)$\\
    \hline 
    $5$ & $1$ & $[1,0] + [2,1] + [1,1] + [1,2] + [0,1]$ & $(1,1,2,2,2) = (1) \oplus (0,1,1,1), (1,1,2,2,3) = (1) \oplus (0,1,1,2),$ \\ & & & $(1,1,2,3,4) = (1) \oplus (0,1,2,3)$\\
    \hline 
    $5$ & $1$ & $[1,0] + 4[1,1] + [0,1]$ & $(1,1,1,1,1) = (1) \oplus (0,0,0,0),(1,1,1,1,2) = (1) \oplus
    (0,0,0,1),$ \\ & & & $(1,1,1,2,2) = (1) \oplus (0,0,1,1), (1,1,1,2,3) = (1) \oplus (0,0,1,2), $ \\
    & & & $(1,1,2,2,3) = (1) \oplus (0,1,1,2), (1,1,2,3,4) = (1) \oplus (0,1,2,3)$ \\
    \hline 
   \end{tabular}
      \caption{Positive $p$-rank intersections in dimension $5$}
    \label{positiveprank}
    \end{table}

\section{Explicit Dieudonn\'{e} modules} \label{sec: explicit}

In this section, we settle some intersection problems by constructing suitable Dieudonn\'{e} modules, or showing that the relevant Dieudonn\'{e} modules do not exist.

\subsection{General approach} \label{ss: explicit general approach}

Recall the definition of a Dieudonn\'{e} module in Section~\ref{ss: Dd}. We say a Dieudonn\'{e} module is \emph{free} if it is free as a $W(k)$-module.
Via the Dieudonn\'{e} functor, principally quasi-polarised free Dieudonn\'{e} modules $M$ that have $W(k)$-rank $2g$ correspond to principally quasi-polarised $p$-divisible groups $G$ of length $2g$. If $(X,\lambda)$ is a principally polarised abelian variety of dimension $g$, then $(X[p^\infty],\lambda)$ is a principally quasi-polarised $p$-divisible group of length $2g$. Conversely, by Lemma \ref{pdivgp} a principally quasi-polarised $p$-divisible group $G$ of length $2g$ gives rise to a point $(X,\lambda)$ in $\mathcal{A}_g$ such that there is an isomorphism $X[p^\infty] \cong G$ that sends the polarisation of $X$ to the quasi-polarisation of $G$. 

It is useful for our purposes to explicitly construct $p$-divisible groups, because the $p$-divisible group determines both the Ekedahl-Oort stratum and the Newton stratum. If $\mathbb{D}(X[p^\infty])=M$, then the Ekedahl-Oort stratum of $X$ is determined by the isomorphism class of $M/pM$ and the Newton stratum of $X$ is determined by the isomorphism class of $M \otimes_{W(k)} W(k)[\frac{1}{p}]$.

Recall that $\sigma$ is the Frobenius automorphism of $W(k)$. Exhibiting a principally quasi-polarised free Dieudonn\'{e} module $M$ of $W(k)$-length $2g$ amounts to specifying
\begin{itemize}
    \item a $\sigma$-linear operator $F$ and $\sigma^{-1}$-linear operator $V$ on the $W(k)$-module $M^\circ :=W(k)\{a_1,\ldots a_{2g}\}$, satisfying $FV=VF=p$;
    \item a non-degenerate alternating bilinear pairing $\langle\cdot, \cdot\rangle$ on $M^\circ$, satisfying $ \langle F(x) , y \rangle = \langle x, V(y)\rangle^\sigma$ for every $x,y\in M^\circ$. 
    \end{itemize}
When $F$, $V$ and $\langle \cdot , \cdot \rangle$ are represented by matrices $A_F$, $A_V$ and $H$, respectively, the latter condition reduces to verifying $A_F^TH = (HA_V)^\sigma$.
    
We now outline how we compute the Ekedahl-Oort type of $M$. Given an element of $x\in M$, write $\overline{x}$ for its image in $M/pM$. We note that the set $\overline{B}=\{\overline{a_1}, \ldots ,\overline{a_{2g}}\}$ is a basis of $M^\circ/pM^\circ$ and, without loss of generality, we assume that $\overline{B}$ is preserved by $\overline{F}$ and $\overline{V}$, up to multiplication by $\pm 1$. Thus, $M/pM$ is, up to this multiplication by $\pm 1$, of the form $N_\varphi$ (see Definition~\ref{def: N_varphi}) for some elementary sequence $\varphi$. Since the occurrences of $\pm 1$ do not alter the Ekedahl-Oort type, but merely allow the polarisation to be defined over $\mathbb{F}_p$, we conclude that the elementary sequence of $M$ is $\varphi$.

For computing the Newton polygon, we use the following result of Zink.
\begin{lem}{({\rm Zink} \cite[Lemma 6.12]{Zink})}  \label{lem: Zink} 
For $n\in \mathbb{Z}_{\geq 1}$, define 
\[
s_M^{(n)} := \frac{\max \{m \; | \; F^n (M^\circ) \subseteq p^m M^\circ\}}{n}
\]
Then the first slope of the Newton polygon of $M$ is $s_M=\displaystyle \lim_{n\to \infty} s_M^{(n)}$
\end{lem}
Recall that the $\sigma$-linear operator $F$ on $M^\circ$ is represented by a matrix $A_F$. We will write $A_F^{(n)}=(f_{i,j}^{(n)})$ for the $n$-th $\sigma$-linear power. We then make the following observation.
\begin{lem} \label{lem: Zink+} 
For every $n\in \mathbb{Z}_{\geq 1}$, we have $s_M^{(n)} = \frac{1}{n}\min_{i,j} \left\{v_p\left(f_{i,j}^{(n)}\right)\right\}$ and $s_M \geq s_M^{(n)}$.
\end{lem}
\begin{proof}
The first statement is clear from the definition. For the second statement, suppose $s_M^{(n)}=m$. Then, by definition, $F^n(M^\circ) \subseteq p^m M^\circ$. As a consequence, we have, for every $k>0$, that $F^{nk}(M^\circ) \subseteq p^{mk} M^\circ$ and hence $s_M^{(nk)} \geq m$. The assertion then follows.
\end{proof}

In order to bound the first slope from above, we also use \cite[Theorem 1.2]{Nygaard} and \cite[1.5.1]{Katz}.

\subsection{Existence results}

In this section, we use the approach outlined above to prove that certain intersections of Ekedahl-Oort strata and Newton strata are non-empty. This amounts to exhibiting principally quasi-polarised free Dieudonn\'{e} modules with the appropriate mod-$p$ reduction and first slope.

At this point, it is useful to introduce the work of \cite{KraftpGruppen} (see also \cite[(9.9)]{oort2001stratification}). Given a word $w=u_{d-1} \cdots u_0$ in the alphabet $\{f,v\}$, one can assign to it a Dieudonn\'{e} module in the following way. Consider the vector space $k^d$ with basis $\{\overline{a_0}, \ldots \overline{a_{d-1}}\}$ and define the following actions of $\overline{F}$ and $\overline{V}$:
\[\overline{F}(\overline{a_i})=\begin{cases} \overline{a_{i+1 \bmod d}} &\hbox{if $u_i=f$} \\ 0 &\hbox{if $u_i=v$} \end{cases} \hspace{8mm}
\overline{V}(\overline{a_{i+1}}) = \begin{cases} \overline{a_{i \bmod d}} &\hbox{if $u_i=v$} \\ 0 &\hbox{if $u_i=f$.}
\end{cases}\]
We denote the Dieudonn\'{e} module thus obtained by $\overline{M}(w)$. We say $w$ is \emph{primitive} if it cannot be obtained by concatenating multiple copies of a shorter word.

\begin{thm}{({\rm Kraft} \cite{KraftpGruppen}, {\rm Oort }\cite[(9.9)]{oort2001stratification})} \label{thm: Kraft}
For any elementary sequence $\varphi$, we can write $N_\varphi \cong \bigoplus_{w\in W} \overline{M}(w)$, where $W$ is a unique multiset of primitive words. 
\end{thm}

Let $M$ be a free Dieudonn\'{e} module. As we shall see, knowing the multiset $W$ associated to $M/pM$ will help us compute the first slope of $M$. For $0<\lambda\leq 1/2$, let $\mathcal{N}_\lambda$ denote the unique Newton stratum of $\mathcal{A}_5$ with first slope $\lambda$.

\begin{lem} \label{lem: tautological}
The intersections $S_{(0,0,1,1,2)} \cap \mathcal{S}_5$, $S_{(0,1,1,2,2)} \cap \mathcal{S}_5$ and $S_{(0,1,1,2,3)} \cap \mathcal{N}_{2/5}$ are non-empty.
\end{lem}
\begin{proof}
In these cases, the relevant free Dieudonn\'{e} module $M$ is the \emph{tautological lift} of $N_\varphi$, as defined in \cite[(2.5)]{OortSimple}. We note that the polarisation in Definition~\ref{def: N_varphi} can also be lifted to a principal quasi-polarisation on $M^\circ$, simply by setting $\langle a_i, a_j\rangle = 1$ whenever $\langle \overline{a_i}, \overline{a_j} \rangle = 1$. One verifies that the condition $\langle F(x), y \rangle = \langle x,V(y) \rangle^\sigma$ is satisfied for any $x,y\in M^\circ$, which follows from the analogous condition that is satisfied modulo $p$. The Newton polygon of a tautological lift is calculated in \cite[(2.6)]{OortSimple}: for $w=u_{d-1} \ldots u_0$, the tautological lift of $\overline{M}(w)$ is isoclinic with slope $\frac{1}{d}\#\{0 \leq i \leq d-1 \; | \; u_i=v\}$, and the slopes of a direct sum are the slopes of the summands.

Using \cite[(9.1)]{oort2001stratification}, we compute $N_{(0,0,1,1,2)} \cong \overline{M}(ffvffvvfvv)$, hence the tautological lift has $1/2$ as its only slope. Through Lemma \ref{pdivgp}, it gives rise to a point in $S_{(0,0,1,1,2)} \cap \mathcal{S}_5$.

Similarly, one obtains $N_{(0,1,1,2,2)} \cong \overline{M}(fffvfvvvfv)$, yielding a point in $S_{(0,1,1,2,2)} \cap \mathcal{S}_5$.

Finally, one computes $N_{(0,1,1,2,3)} \cong \overline{M}(fffvv) \oplus \overline{M}(vvvff)$, giving a point in $S_{(0,1,1,2,3)} \cap \mathcal{N}_{2/5}$.
\end{proof}

\begin{lem} \label{lem: fffvv 1/2}
The intersection $S_{(0,1,1,2,3)} \cap \mathcal{S}_5$ is non-empty.
\end{lem}
\begin{proof}
We let $F$ act on $M^\circ = W(k) \{a_1,\ldots ,a_{10}\}$ as follows:
\begin{equation*}
\begin{split}
\begin{aligned}
    F(a_1)&=a_2 \\
    F(a_6)&=pa_7-pa_2
\end{aligned}
\end{split}
\quad \quad \quad
\begin{split}
\begin{aligned}
    F(a_2)&=a_3     \\
    F(a_7)&=pa_8
\end{aligned}
\end{split}
\quad \quad \quad
\begin{split}
\begin{aligned}
    F(a_3)&=a_4    \\
    F(a_8)&=pa_9 
\end{aligned}
\end{split}
\quad \quad \quad
\begin{split}
\begin{aligned}
    F(a_4)&=pa_5+pa_{10}     \\
    F(a_9)&=a_{10}
\end{aligned}
\end{split}
\quad \quad \quad
\begin{split}
\begin{aligned}
    F(a_5)&=pa_1      \\
    F(a_{10})&=a_6.
\end{aligned}
\end{split}
\end{equation*}

We let $V=pF^{-1}$. For $1\leq i\leq 5$, we set $\langle a_i, a_{j}\rangle=\delta_{i+5,j}$ and extend this to a $W(k)$-bilinear, alternating pairing. Using \texttt{magma} \cite{Magma}, we check that $A_F^TH=HA_V=(HA_V)^\sigma$ and conclude that this defines a principally quasi-polarised free Dieudonn\'{e} module $M$.

Reducing modulo $p$, we find $M/pM \cong \overline{M}(fffvv) \oplus \overline{M}(vvvff) \cong N_{(0,1,1,2,3)}$. 

The Newton polygon of $M$ is now computed through analysing the $p$-adic valuation of powers of $A_F$. Using \texttt{magma}, we establish that $p^9 \mid A_F^{(20)}$. Lemma~\ref{lem: Zink+} then yields that the first slope satisfies $s\geq 9/20$. The only eligible first slope in this range is $1/2$, so that we conclude that $M$ is supersingular.  Finally, through Dieudonn\'{e} theory and Lemma \ref{pdivgp}, $M$ gives rise to a point in $S_{(0,1,1,2,3)} \cap \mathcal{S}_5$.
\end{proof}

\begin{remark} \label{rmk: TL not highest}
In Lemma~\ref{lem: tautological}, we establish that the tautological lift of $N_{(0,1,1,2,3)}$ has slopes $2/5$ and $3/5$. On the other hand, in Lemma~\ref{lem: fffvv 1/2}, we construct a supersingular lift of $N_{(0,1,1,2,3)}$. Prior to discovering this, we were not aware of an example of a principally quasi-polarised free Dieudonn\'{e} module $M$ whose Newton polygon lies higher than that of the tautological lift of $M/pM$. Such examples do not exist in dimension $g \leq 4$.
\end{remark}

\begin{lem} \label{lem: (0,1,1,2,2) with 2/5}
The intersection $S_{(0,1,1,2,2)} \cap \mathcal{N}_{2/5}$ is non-empty.
\end{lem}
\begin{proof}
We now define the $\sigma$-linear operator $F$ on $M^\circ=W(k)\{a_1,\ldots a_{10} \}$ as follows:
\begin{equation*}
\begin{split}
\begin{aligned}
    F(a_1)&=a_2 \\
    F(a_6)&=pa_7
\end{aligned}
\end{split}
\quad \quad \quad
\begin{split}
\begin{aligned}
    F(a_2)&=a_3     \\
    F(a_7)&=pa_8
\end{aligned}
\end{split}
\quad \quad \quad
\begin{split}
\begin{aligned}
    F(a_3)&=a_4    \\
    F(a_8)&=pa_9 
\end{aligned}
\end{split}
\quad \quad \quad
\begin{split}
\begin{aligned}
    F(a_4)&=pa_5     \\
    F(a_9)&=a_{10}
\end{aligned}
\end{split}
\quad \quad \quad
\begin{split}
\begin{aligned}
    F(a_5)&=a_6+pa_1     \\
    F(a_{10})&=-pa_1.
\end{aligned}
\end{split}
\end{equation*}

As before, we let $V=pF^{-1}$ and set $\langle a_i,a_j\rangle=\delta_{i+5,j}$ for $1\leq i \leq 5$. We verify $A_F^TH=HA_V$ and conclude we have constructed a principally quasi-polarised free Dieudonn\'{e} module $M$. By reducing $A_F$ and $A_V$ modulo $p$, we establish $M/pM=\overline{M}(fffvfvvvfv) \cong N_{(0,1,1,2,2)}.$ Finally, we compute the Newton polygon of $M$. An explicit computation gives the block matrix
\[
A_F^{(5)} = \begin{bmatrix}
    \mathrm{diag}(p^2, p^2, p^2, p^2, p^2) & \mathrm{diag}(-p^4, -p^3, -p^2, -p, -p^2) \\
    \mathrm{diag}(p, p^2, p^3, p^4, p^3) & 0
\end{bmatrix}.
\]
We verify that $p^7 \mid A_F^{(20)}$ and hence $s_M\geq 7/20$ by Lemma~\ref{lem: Zink+}. On the other hand, inductively we show that  $v_p(f_{1,1}^{(5n)})=2n$ for every $n>0$, which implies $s_M \leq 2/5$. Hence we conclude $s_M=2/5$.

\end{proof}

\begin{lem} \label{lem: (0,0,1,2,3) with 2/5}
The intersection $S_{(0,0,1,2,3)} \cap \mathcal{N}_{2/5}$ is non-empty.
\end{lem}
\begin{proof}
Define the $\sigma$-linear operator $F$ on $M^\circ =W(k)\{a_1, \ldots ,a_{10}\}$ as follows:
\begin{equation*}
\begin{split}
\begin{aligned}
    F(a_1)&=a_2 \\
    F(a_6)&=a_7 
\end{aligned}
\end{split}
\quad \quad \quad
\begin{split}
\begin{aligned}
    F(a_2)&=a_3+pa_5 \\
    F(a_7)&=a_8+pa_1
\end{aligned}
\end{split}
\quad \quad \quad
\begin{split}
\begin{aligned}
    F(a_3)&=pa_4 \\
    F(a_8)&=pa_9
\end{aligned}
\end{split}
\quad \quad \quad
\begin{split}
\begin{aligned}
    F(a_4)&=-pa_1 \\
    F(a_9)&=pa_{10} 
\end{aligned}
\end{split}
\quad \quad \quad
\begin{split}
\begin{aligned}
    F(a_5)&=a_6 \\
    F(a_{10})&=-pa_5.
\end{aligned}
\end{split}
\end{equation*}
As before, we define $V=pF^{-1}$. We now set $\langle a_i, a_j \rangle = \delta_{i+2,j}$ for $1\leq i \leq 2$ and $\langle a_i,a_j \rangle = \delta_{i+3,j}$ for $5\leq i \leq 7$, which extends uniquely to a non-degenerate, bilinear, alternating pairing on $M^\circ$. We verify $A_F^TH =HA_V=(HA_V)^\sigma$. Reducing $A_F$ and $A_V$ modulo $p$ and using \cite[(9.1)]{oort2001stratification}, we see $M/pM \cong \overline{M}(ffvv) \oplus \overline{M}(fffvvv) \cong N_{(0,0,1,2,3)}$. Finally, we compute the Newton polygon of $M$. An explicit computation shows $p^7 \mid A_F^{(20)}$ and hence $s_M \geq 7/20$ by Lemma~\ref{lem: Zink+}. Furthermore, we verify that $p^9 \nmid f_{3,1}^{(22)}$, so that $M$ is not supersingular by \cite[Theorem 1.2]{Nygaard}. Thus the first slope of $M$ must be $2/5$.
\end{proof}

\begin{lem} \label{lem: (0,1,2,2,3) and 2/5}
The intersection $S_{(0,1,2,2,3)} \cap \mathcal{N}_{2/5}$ is non-empty.    
\end{lem}
\begin{proof}
Consider the following $\sigma$-linear operator $F$ on $M^\circ$:
\begin{equation*}
\begin{split}
\begin{aligned}
    F(a_1)&=a_2+pa_3 \\
    F(a_6)&=a_7 +pa_1
\end{aligned}
\end{split}
\quad \quad \quad
\begin{split}
\begin{aligned}
    F(a_2)&=-pa_1 \\
    F(a_7)&=pa_8
\end{aligned}
\end{split}
\quad \quad \quad
\begin{split}
\begin{aligned}
    F(a_3)&=a_4 \\
    F(a_8)&=pa_9
\end{aligned}
\end{split}
\quad \quad \quad
\begin{split}
\begin{aligned}
    F(a_4)&=a_5 \\
    F(a_9)&=pa_{10} 
\end{aligned}
\end{split}
\quad \quad \quad
\begin{split}
\begin{aligned}
    F(a_5)&=a_6 \\
    F(a_{10})&=-pa_3.
\end{aligned}
\end{split}
\end{equation*}
Set $V=pF^{-1}$. We define $\langle a_1,a_j\rangle =\delta_{2,j}$ and $\langle a_i,a_j\rangle = \delta_{i+4,j}$ for $3\leq i \leq 6$ and extend this to a bilinear alternating pairing, satisfying $A_F^T H = HA_V$. We observe $M/pM \cong \overline{M}(fv) \oplus \overline{M}(ffffvvvv) \cong N_{(0,1,2,2,3)}$. An explicit computation shows $p^7 \mid A_F^{(20)}$, which yields $s_M \geq 7/20$. Finally, we verify in this case that $p^9 \nmid f_{8,1}^{(22)}$, whence by \cite[Theorem 1.2]{Nygaard} $M$ is not supersingular and must have first slope $2/5$.
\end{proof}

\begin{remark} \label{rmk: superspecial}
The Dieudonn\'{e} module $M$ in the proof above has has positive superspecial rank, meaning $M/pM$ contains the $p$-torsion of a supersingular elliptic curve as a direct summand. On the other hand, the slope $1/2$ does not appear in the Newton polygon of $M$, meaning the $p$-divisible group of a supersingular elliptic curve cannot be embedded into $M$. This phenomenon does not occur in dimension $g \leq 4$.
\end{remark}

\subsection{Non-existence results}

We now show that certain intersections of Ekedahl-Oort strata and Newton strata are empty, by proving the non-existence of free Dieudonn\'{e} modules that simultaneously have the relevant mod-$p$ reduction and the relevant Newton polygon.

We first prove a general result, Proposition~\ref{prop: slope=1/n}, that might be of independent interest. For this, let $F$ be a $\sigma$-linear operator on $M^\circ=W(k)\{a_1, \ldots ,a_d\}$ and let $V=pF^{-1}$. Let $A_F^{(n)}=(f_{i,j}^{(n)})$ represent $F^n$ with respect to this basis.  Assume that the basis $\{\overline{a_1}, \ldots ,\overline{a_d}\}$ of $M^\circ/pM^\circ$ is preserved by $\overline{F}$ and $\overline{V}$. 

\begin{lem} \label{lem: p^2}
Let $1\leq i,j, \leq d$. Assume $\overline{a}_j \in \mathrm{im} (\overline{V})\setminus \{\overline{V}(\overline{a_i})\}$ and $\overline{a}_i \notin \mathrm{im}(\overline{F})$. Then $p^2 \mid f_{i,j}^{(1)}$.
\end{lem}
\begin{proof}
By assumption, there exists an integer $m\neq i$ such that $\overline{V}(\overline{a_m})=\overline{a_j}$, so that $V(a_m)=a_j + pb$ for some $b\in M^\circ$. Since $F\circ V=p$, we have 
$pa_m = F(V(a_m)) = F(a_j +pb) =F(a_j)+pF(b)$,
implying $F(a_j)=pa_m-pF(b)$, and in particular $F(a_j)\equiv pa_m - p F(b) \bmod p^2$.
It follows that $f_{i,j}^{(1)} \equiv 0 \bmod p^2$ when $\overline{a_i} \notin \mathrm{im}(\overline{F})$.
\end{proof}

\begin{prop} \label{prop: slope=1/n}
Let $M$ be a free Dieudonn\'{e} module and write $M/pM \cong \bigoplus_{\ell=1}^r \overline{M}(w_\ell)$ and assume that $w_1=ff \cdots fv$ has length $n$. Assume moreover that, for $2\leq \ell \leq r$, the word $w_\ell$ contains $v$ at least once and that any sequence of $f$'s in $w_\ell$ has length at most $n-1$. Then the first slope of the Newton polygon of $M$ is $\frac{1}{n}$.
\end{prop}
\begin{proof}
Let $a_1$ be the generator of $M^\circ$ so that $\overline{a_1} \in \overline{M}(w_1)$ and $\overline{a_1} \notin \mathrm{im}(\overline{F})$. Then by construction we have $v_p\left(f_{1,1}^{(n)}\right) = 1$. Moreover, by the assumption on the lengths of sequences of $f$'s in the other $w_\ell$, combined with Lemma~\ref{lem: p^2}, it follows that $v_p\left( f_{1,j}^{(n)}\right)\geq 2$ for every $j>1$. Note also that the assumptions imply $p \mid A_F^n$. Using induction, it then follows for every integer $k\geq 1$ that $p^k \mid A_F^{kn}$ and
$f_{1,1}^{(kn)} = \sum_{j} f_{1,j}^{(n)} f_{j,1}^{((k-1)n)}$
has valuation $k$, since the $j=1$ term has valuation $k$ (by the induction hypothesis) and the $j>1$ terms have valuation at least $k+1$. This implies that $s_M^{(nk)}=1/n$ for each $k$ and hence $s_M=1/n$.
\end{proof}

Note that $M$ here does not need to be quasi-polarised. We use this general result to settle the Ekedahl-Oort stratum $S_{(0,0,1,2,2)}$.

\begin{cor} \label{cor: (0,0,1,2,2) cont 1/3}
The Ekedahl-Oort stratum $S_{(0,0,1,2,2)}$ is completely contained in the Newton stratum $\mathcal{N}_{1/3}$.
\end{cor}
\begin{proof}
We begin by establishing $N_{(0,0,1,2,2)} \cong \overline{M}(ffv) \oplus \overline{M}(vvf) \oplus \overline{M}(ffvv)$. Now, let $(X,\lambda)$ be a point in $S_{(0,0,1,2,2)}$ and let $M=\mathbb{D}(X[p^\infty])$. Then applying Proposition~\ref{prop: slope=1/n} with $n=3$ and $w_1=ffv$ yields that the first slope of $M$ equals $1/3$, which determines the whole Newton polygon.
\end{proof}

\begin{lem} \label{lem: (0,1,1,1,2) not 2/5}
The intersection $S_{(0,1,1,1,2)} \cap \mathcal{N}_{2/5}$ is empty.
\end{lem}
\begin{proof}
Assume $M$ is a free Dieudonn\'{e} module such that $M/pM \cong N_{(0,1,1,1,2)} \cong \overline{M}(fv) \oplus \overline{M}(fv) \oplus \overline{M}(fffvvv)$. Without loss of generality, there exist $b_1, \ldots b_{10}\in M^\circ$ such that:
\begin{equation*}
\begin{split}
\begin{aligned}
    F(a_1)&=a_2+pb_1 \\
    F(a_6)&=a_7 +pb_6
\end{aligned}
\end{split}
\quad 
\begin{split}
\begin{aligned}
    F(a_2)&=-pa_1 +pb_2 \\
    F(a_7)&=a_8+pb_7
\end{aligned}
\end{split}
\quad
\begin{split}
\begin{aligned}
    F(a_3)&=a_4+pb_3 \\
    F(a_8)&=pa_9+pb_8
\end{aligned}
\end{split}
\quad 
\begin{split}
\begin{aligned}
    F(a_4)&=-pa_3+pb_4 \\
    F(a_9)&=pa_{10}+pb_9 
\end{aligned}
\end{split}
\quad 
\begin{split}
\begin{aligned}
    F(a_5)&=a_6 + pb_5 \\
    F(a_{10})&=-pa_5+pb_{10}.
\end{aligned}
\end{split}
\end{equation*}
for the purposes of this proof, we define $c:=f_{5,7}^{(1)}+f_{6,8}^{(1)}$ and note 
\begin{equation} \label{eq: c mod p^2}
    c=f_{5,7}^{(1)}+f_{6,8}^{(1)} \equiv f_{6,7}^{(2)} \equiv f_{6,6}^{(3)} \equiv f_{7,7}^{(3)} \equiv f_{8,5}^{(6)} \bmod p^2.
\end{equation}
We treat two cases based on $v_p(c)$.
\begin{enumerate}
\item First, assume $v_p(c) \geq 2$. 
Then a straightforward, but tedious, calculation, using Lemma~\ref{lem: p^2} and Equation~\eqref{eq: c mod p^2}, shows that $p^2 \mid A_F^{(6)}$ and $v_p(f_{i,j}^{(6)}) \geq 3$ if $i\in \{1,3,5,6,10\}$ or if $j\in\{2,4,7,8,9\}$.
Thus it follows that $p^5 \mid A_F^{(12)}$ and hence $s_M\geq 5/12$, implying $s_M=1/2$.

\item Now assume $v_p(c)=1$. Our aim is to show that $s_M<2/5$. For this, we use \cite[1.5.1]{Katz}, which provides that all Newton slopes of $M$ are at least $\lambda$ if and only if, for every $n$, we have $p^{\lceil n\lambda \rceil} | A_{F}^{(g+n)}$. Substituting $g=5$, $\lambda=2/5$ and $n=13$, we see that $M$ has first slope at least $2/5$ if and only if $p^6 \mid A_F^{(18)}$. We prove that $p^6 \nmid f_{8,5}^{(18)}$. We write $g_{k,\ell}:=f_{8,k}^{(6)} f_{k,\ell}^{(6)}f_{\ell,5}^{(6)}$ for $1\leq k,l \leq 10$, so that $f_{8,5}^{(18)}= \sum_{k,\ell} g_{k,\ell}$. We first observe that $f_{8,5}^{(6)}\equiv c \bmod p^2 \not \equiv 0$ and $v_p(f_{i,j}^{(6)})\geq 2$ for $(i,j)\neq (8,5)$, so that $p^6 \mid g_{l,\ell}$ unless $k=5$, $\ell=8$ or $(k,\ell)=(8,5)$. We furthermore note that $p^3 \mid f_{k,8}^{(6)}$ unless $k=8$, and, analogously, that $p^3 \mid f_{5,\ell}^{(6)}$ unless $\ell=5$. Hence $g_{k,\ell} \not \equiv 0 \bmod p^6$ only if $\{k,l\}=\{5,8\}$. We compute $f_{5,5}^{(6)} \equiv f_{5,7}^{(1)}c \bmod p^3$, while $f_{8,8}^{(6)} \equiv f_{6,8}^{(1)}c \bmod p^3$ and $f_{5,8}^{(6)}\equiv f_{5,7}^{(1)}f_{6,8}^{(1)} c \bmod p^4.$ We conclude
\begin{align*}
f_{8,5}^{(18)}&\equiv  f_{8,5}^{(6)}f_{5,5}^{(6)}f_{5,5}^{(6)} + f_{8,5}^{(6)}f_{5,8}^{(6)}f_{8,5}^{(6)} + f_{8,8}^{(6)}f_{8,5}^{(6)}f_{5,5}^{(6)} + f_{8,8}^{(6)}f_{8,8}^{(6)}f_{8,5}^{(6)} \\
&\equiv c^3((f_{5,7}^{(1)})^2+f_{5,7}^{(1)}f_{6,8}^{(1)}+f_{5,7}^{(1)}f_{6,8}^{(1)}+(f_{6,8}^{(1)})^2) \equiv c^5 \bmod p^6.
\end{align*}
Since $v_p(c)=1$, it follows that $p^6 \nmid f_{8,5}^{(18)}$ and hence $s_M<2/5$, meaning $s_M=1/3$.

\end{enumerate}
We conclude that $s_M$ can only be $1/2$ or $1/3$, and thus the intersection $S_{(0,1,1,1,2)} \cap \mathcal{N}_{2/5}$ is empty.

\end{proof}

\begin{remark} \label{rmk: Newton sandwich}
Combining Corollary~\ref{firstslopecor}, Proposition~\ref{anumber1intersections}, Table~\ref{additions} and Lemma~\ref{lem: (0,1,1,1,2) not 2/5}, one sees that $S_{(0,1,1,1,2)}$ intersects $\mathcal{N}_{1/3}$ and $\mathcal{S}_5$, but not $\mathcal{N}_{2/5}$, while the Newton polygon of $\mathcal{N}_{2/5}$ lies in between those of $\mathcal{N}_{1/3}$ and $\mathcal{S}_5$. In contrast, given any Ekedahl-Oort stratum $S_\varphi \subset \mathcal{A}_g$ for $g\leq 4$, the list of Newton strata intersecting $S_\varphi$ is determined by a lower bound and an upper bound for the Newton polygon. 
\end{remark}

\section{Intersections in dimension $5$ and $p$-rank $0$} \label{prankzeroproof}
To complete the proof of Theorem \ref{mainthm} we treat the case of $p$-rank zero. First, by Corollary \ref{minimalitycorollary} and the calculations in Table \ref{minimaleltseq} we find:
$$ S_{(0,1,2,3,3)} \subset \mathcal{N}_{1/5}, S_{(0,1,2,2,2)} \subset \mathcal{N}_{1/4}, S_{(0,1,1,1,1)} \subset \mathcal{N}_{1/3}, S_{(0,0,1,1,1)} \subset \mathcal{N}_{2/5},$$
and using Proposition \ref{contaimentinsslocus} we conclude
$$
 S_{\varphi} \subset \mathcal{S}_{g}  \Leftrightarrow \varphi \in \{ (0,0,0,0,0), (0,0,0,0,1), (0,0,0,1,1), (0,0,0,1,2) \}.
$$
By Proposition \ref{anumber1intersections}, we deduce that $S_{(0,1,2,3,4)}$ has non-empty intersection with all Newton strata. To decide the remaining intersections, we order by first slopes and use the following observation. 
\begin{prop} \label{orderingcor}
 Let $\varphi$ be an elementary sequence of length $g$ and $\xi$ a Newton polygon of dimension $5$. If $S_{\varphi} \cap \mathcal{N}(\xi) \ne \emptyset$, then the first slope of $\xi$ is at least $\lambda_{\varphi}$.
\end{prop}
\begin{proof}
 By Proposition \ref{firstslope} $S_{\varphi} \subset Z_{\lambda_{\varphi}}$  and our claim follows from the definition of $Z_{\lambda_{\varphi}}$. 
\end{proof}
From Table \ref{tab:firstslopes}, and the above remarks we deduce that 
$$\mathcal{N}_{1/5} = S_{(0,1,2,3,3)} \cup (S_{(0,1,2,3,4)} \cap  \mathcal{N}_{1/5}).$$
Similarly, 
$$\mathcal{N}_{1/4} = S_{(0,1,2,2,2)} \cup (S_{(0,1,2,2,3)} \cap \mathcal{N}_{1/4}) \cup (S_{(0,1,2,3,4)} \cap \mathcal{N}_{1/4})$$
and the second intersection appearing in this expression is non-empty by Corollary \ref{firstslopecor}.
By the above remarks, $\mathcal{N}_{1/3}$ fully contains $S_{(0,1,1,1,1)}$ and it has non-empty intersection with $S_{(0,1,2,3,4)}$. Moreover, by Corollary \ref{firstslopecor} it has non-empty intersections with the strata defined by
$$(0,1,1,2,3), (0,0,1,2,3), (0,1,1,2,2), (0,0,1,2,2), (0,1,1,1,2).$$
As $(0,1,2,2,3) = (0) \oplus (0,1,2,3)$, it follows from Propositions \ref{inductiveprop} and  \ref{dim4intersections} that $\mathcal{N}_{1/3} \cap S_{(0,1,2,2,3)} \ne \emptyset$. This completes the proof for $\mathcal{N}_{1/3}$ as there are no other possible non-empty intersections by Proposition \ref{orderingcor}.

For $\mathcal{N}_{2/5}$, we have $S_{(0,0,1,1,1)} \subset \mathcal{N}_{2/5}$, $S_{(0,1,2,3,4)} \cap \mathcal{N}_{2/5} \ne \emptyset$ and $S_{(0,0,1,1,2)} \cap \mathcal{N}_{2/5} \ne \emptyset$ by Corollary \ref{firstslopecor}. By Lemmata \ref{lem: tautological}, \ref{lem: (0,1,1,2,2) with  2/5}, \ref{lem: (0,0,1,2,3) with 2/5}, \ref{lem: (0,1,2,2,3) and 2/5} $\mathcal{N}_{\frac{2}{5}}$ has non-empty intersection with the strata defined by
$$(0,1,1,2,3), (0,1,1,2,2), (0,0,1,2,3),(0,1,2,2,3) $$
and by Corollary \ref{cor: (0,0,1,2,2) cont 1/3}  and Lemma \ref{lem: (0,1,1,1,2) not 2/5} it has empty intersection with the strata defined by 
$$ (0,0,1,2,2), (0,1,1,1,2).$$

It remains to treat the case of the supersingular locus $\mathcal{S}_5$. In addition to the intersections discussed above, $\mathcal{S}_5$ has non-trivial intersections with the strata defined by:
$$(0,1,1,1,2) = (0) \oplus (0,1,1,2), (0,1,2,2,3) = (0) \oplus (0,1,2,3),(0,0,1,2,3) = (0,1) \oplus (0,1,2)$$
by Propositions \ref{inductiveprop}, \ref{dimension3intersections} and \ref{dim4intersections}. The intersections of $\mathcal{S}_5$ with the strata defined by 
$$(0,0,1,1,2), (0,1,1,2,2) ,(0,1,1,2,3) $$
are non-empty by Lemmata \ref{lem: tautological} and \ref{lem: fffvv 1/2}, and $ S_{(0,0,1,2,2)} \cap \mathcal{S}_5 = \emptyset$ by Corollary \ref{cor: (0,0,1,2,2) cont 1/3}. This completes our proof.

\bibliographystyle{abbrv}

\bibliography{ref}

\end{document}